\definecolor{lucagreen}{RGB}{0,150,0}
\DeclareMathAlphabet      {\mathbf}{OT1}{cmr}{bx}{n}
\newtheorem{theorem}{Theorem}[section]
\newtheorem{proposition}[theorem]{Proposition}
\newtheorem{lemma}[theorem]{Lemma}
\newtheorem{corollary}[theorem]{Corollary}
\newtheorem{definition}[theorem]{Definition} 
\newtheorem{remark}[theorem]{Remark}         
\newtheorem{hp}[theorem]{Hypotheses}         
\def \R{\mathbb{R}}
\newcommand{\miezz}{\frac{1}{2}}
\newcommand{\eps}{\varepsilon}
\newcommand{\into}{\ensuremath{\int_{\Omega}}}
\newcommand{\inti}{\ensuremath{\int_{0}^{t}\int_{\Omega}}}
\newcommand{\intif}{\ensuremath{\int_{0}^{T}\int_{\Omega}}}
\newcommand{\norm}[1]{\ensuremath{\left\Arrowvert #1 \right\Arrowvert}}
\newcommand{\norminf}[1]{\ensuremath{\left\Arrowvert #1 \right\Arrowvert_\infty}}
\newcommand{\spazio}{\hspace{0.08cm}}
\newcommand{\dm}[1]{\ensuremath{\frac{\delta #1}{\delta m}}}
\newcommand{\dw}{\mathbf{d}_1}
\newcommand{\supt}{\sup\limits_{t\in[t_0,T]}}
\newcommand{\supo}{\sup\limits_{t\in[0,T]}}
\newcommand{\tr}[1]{\mathrm{tr}(a(x)D^2#1)}
\newcommand{\bdone}[1]{{#1}_{\vert \partial\Omega}=0}
\newcommand{\am}{_{\frac{\alpha}{2},\alpha}}
\newcommand{\amu}{_{\frac{1+\alpha}{2},1+\alpha}}
\newcommand{\amd}{_{1+\frac{\alpha}{2},2+\alpha}}
\newcommand{\amv}{_{1,2+\alpha}}
\newcommand{\be}{\begin{equation}}
\newcommand{\ee}{\end{equation}}
\newcommand{\amf}{_{L^1}}
\newcommand{\amc}{_{-(1+\alpha),D}}
\newcommand{\amb}{_{-(1+\alpha)}}
\newcommand{\aam}{_{-\left(\frac\alpha2,\alpha\right)}}
\newcommand{\amm}[1]{_{#1,-(1+\alpha),D}}
\newcommand{\diric}[1]{#1_{|\partial\Omega=0}}
\newcommand{\nobt}[1]{\sum\limits_{ij}\partial^2_{ij}(a_{ij}(x) #1)}
\journal{Nonlinear Analysis}
\begin{document}

\begin{frontmatter}



\title{The Master Equation in a Bounded Domain with Absorption}


\author[label1]{Luca Di Persio}

\author[label2]{Matteo Garbelli}

\author[label3]{Michele Ricciardi}

\address[label1]{Department of Computer Science, University of Verona, 
Strada le Grazie $15$, Verona, $37134$, Italy, 
{\it luca.dipersio@univr.it}}

\address[label2]{Department of Computer Science, University of Verona, 
Strada le Grazie $15$, Verona, $37134$, Italy, 
{\it matteo.garbelli@univr.it}    }

\address[label3]{Department of Economics, LUISS - Guido Carli University of Rome, Viale Romania 32, Roma, 00197, Italy,
{\it ricciardim@luiss.it}}


%
%

\begin{abstract}
We analyze the Master Equation within Mean Field Games (MFG) theory considering a bounded domain with homogeneous Dirichlet conditions. Concerning the N-players differential game, the player's dynamic ends when touching the boundary. We analyze the well-posedness of the Master Equation and the regularity of its solutions for a suitable class of parabolic equations.
\end{abstract}



\begin{keyword}
Master Equation \sep first order MFG \sep Dirichlet conditions \sep Absorption



\end{keyword}

\end{frontmatter}



\section{Introduction}
Mean Field Games (MFG) theory was introduced in 2006 by J.-M. Lasry and P.-L. Lions (\cite{LL1,LL2,LL-japan,LL3}) to describe the asymptotic behavior of differential games with a large number of players, also called agents. A similar definition was given in the same year by Caines, Huang and Malham\'e \cite{HCM}. 

In the description of a game with $N$ players, each agent chooses his own strategy to minimize a certain functional cost, which also depends on the strategies of the other agents. The structure of this problem becomes really intricate when the number of players is large, and this leads us to study the asymptotic behavior of the Nash equilibria for $N\gg1$, under some symmetry conditions for the agents and their dynamics. The system introduced to describe the macroscopic behavior of the optimal strategies for $N\to+\infty$ is called \emph{Mean Field Games system}, and it is composed of a backward Hamilton-Jacobi-Bellman equation for the value function $u$ of the single agent coupled with a forward Fokker-Planck equation satisfied by $m$, the distribution law of the population. A typical form of this system is the following:
\begin{equation}\label{eq:MfG}
\begin{cases}
-u_t-\mathrm{tr}(a(x)D^2u)+H(t,x,Du)=F(t,x,m),\\
m_t-\sum\limits_{i,j}\partial^2_{i,j}(a_{ij}(x)m)-\mathrm{div}(mH_p(t,x,Du))=0,\\
m(0)=m_0,\qquad u(T)=G(x,m(T)).
\end{cases}
\end{equation}
Here, $H$ is called the \emph{Hamiltonian} of the system, whereas $a$ is a uniformly elliptic matrix, which corresponds to the second-order part of the infinitesimal generator associated with the SDE
\begin{equation}
\label{eq:sde1}
dX_t=b(t,X_t,\alpha_t)dt+\sqrt 2\sigma(X_t)dB_t,
\end{equation}
which represents the controlled dynamic of the generic player.


Lions proved, in his lectures at Collège de France (some lectures are still available in \cite{Lionscollege}), that the solutions of the MFG system are the trajectories of an infinite-dimensional PDE, called \emph{Master Equation}, which summarizes the MFG system in a unique equation. The importance of the Master Equation relies on the so-called \emph{convergence problem}: a rigorous convergence result of the Nash equilibria in the $N$-players game towards the optimal strategies in the MFG presents many difficulties, due to the lack of compactness properties of \eqref{eq:MfG}. Hence, most of the results in the literature related to the convergence problem are proved by means of the Master Equation.

So far, the Master Equation and the convergence problem have been studied in various frameworks. However, most of the literature mainly considers the case where the players' state space is the torus $\mathbb{T}^d$, hence a periodic framework, or the whole space $\R^d$.

Nevertheless, in many applications it is crucial to consider situations where the dynamic of the players is confined in a bounded domain, and boundary conditions become unavoidable issues we have to deal with. See, for example, the models developed in \cite{aid1}, \cite{burger}, \cite{Burzoni}, \cite{Campi}, \cite{GBT}.

In this setting, several different boundary behaviors can arise depending on how the controlled dynamics
\eqref{eq:sde1} and the Mean Field Games system are modeled. The most common situations can be summarized as follows:
\begin{itemize}
\item[(i)] \emph{Invariant domains.}  
The drift and diffusion coefficients in the dynamics \eqref{eq:sde1} are constructed in such a way that the trajectories of the agents remain inside the domain for any admissible control. 
This setting leads to Mean Field Games with \emph{invariance conditions} on the boundary, which have been studied, e.g., in \cite{PorrettaRicciardi}.

\item[(ii)] \emph{State constraints.}  
Not all the controls guarantee the required confinement, but the optimal control is chosen in such a way that the optimal trajectories stay inside the domain. 
This situation is referred to as Mean Field Games with \emph{state constraints}. A comprehensive result of first-order Mean Field Games (without diffusion $\sigma$ in \eqref{eq:sde1}) has been studied in \cite{MR3888967,CaCaCa2018}. See also \cite{MR4135073,cannarsa2020weak,CAPUANI2022180,CapMarRic} for the first-order case, and \cite{PorRic-ergodic} for a result in the second-order case.

\item[(iii)] \emph{Reflecting dynamics.}  
In this case, the process $X_t$ is forced to remain in the domain by means of a reflection term acting on the boundary (which may represent, in the applications, a physical wall or barrier). 
The associated Mean Field Games system is then coupled with \emph{Neumann boundary conditions} for both the HJB and the Fokker–Planck equations.

\item[(iv)] \emph{Absorbing boundaries.}  
In this last case, the process can exit the domain, but the game stops as soon as the trajectory hits the boundary. 
Here, the Mean Field Games system \eqref{eq:MfG} is complemented by \emph{Dirichlet boundary conditions}, corresponding to the \emph{absorption} of the players at the boundary.
\end{itemize}

The last two cases have been widely studied for the MFG system. See, for example, \cite{Porretta} for a very comprehensive result of Dirichlet and Neumann boundary conditions with minimal assumptions on the data, \cite{FestaRicciardi} for forward-forward Mean Field Games with Dirichlet or Neumann Boundary conditions, and \cite{Cirant} for Neumann boundary conditions in multi-population MFG. See also \cite{GomesRicciardi} for a result on first-order Mean Field Games with Neumann boundary conditions.

As regards the Master Equation, the results in a bounded domain are significantly fewer. The papers \cite{Ricciardi,Ricciardi3} provide a well-posedness result on the Master Equation and the study of the convergence problem in the case of Neumann boundary conditions, but we are not aware of results related to the other scenarios previously described.

In our paper, we aim to study the well-posedness of the Master Equation in a bounded domain $\Omega\subset\R^d$ and in a framework of absorbing boundary conditions. This paper can be thought of as a first, crucial step in order to prove the related convergence problem for $N$-player games with absorbing boundary conditions, which will be presented in a forthcoming paper.

In this case, the Master Equation is defined from its characteristics, which are solutions of the MFG system \eqref{meanfieldgames} constrained with the following Dirichlet boundary conditions: for all $x\in\partial\Omega$ and $t\in[0,T]$,
\begin{equation}\label{eq:Dirichlet}
u(t,x)=0,\qquad\qquad\qquad m(t,x)=0.
\end{equation}
To be more precise, we consider the set $\mathcal{P}^{sub}(\Omega)$ of Borel sub-probability measures on $\Omega$, and for $(t_0,x,m_0)\in (0,T)\times\Omega\times\mathcal{P}^{sub}(\Omega)$ we consider the solution $(u,m)$ of the MFG system \eqref{eq:MfG}, with boundary conditions \eqref{eq:Dirichlet} and with initial condition $m(t_0)=m_0$. Then we define the function $U$ as
\begin{equation}
\label{eq:u2}
U:[0,T]\times\Omega\times\mathcal{P}^{sub}(\Omega),\qquad U(t_0,x,m_0):=u(t_0,x).
\end{equation}
If we compute, at least formally, the equation satisfied by $U$, we obtain an infinite-dimensional equation, called {\bf Master Equation}. In this case, the Master Equation takes the following form:
\begin{equation}\begin{split}
\label{Master}
\left\{
\begin{array}{rl}
\displaystyle
&-\,\partial_t U(t,x,m)-\mathrm{tr}\left(a(x)D_x^2 U(t,x,m)\right)+H\left(t,x,D_x U(t,x,m)\right) \medskip \\
\displaystyle & \displaystyle \quad -\int_\Omega \mathrm{tr}\left(a(y)D_y D_m U(t,x,m,y)\right)dm(y) \medskip \\ 
\displaystyle
& \displaystyle	\quad + \int_\Omega D_m U(t,x,m,y)\cdot H_p(t,y,D_x U(t,y,m))dm(y)= F(t,x,m)  \medskip \\
\displaystyle &\hspace{4.8cm}\mbox{in }[0,T]\times\Omega\times\mathcal{P}^{sub}(\Omega)\spazio, \medskip\\
\displaystyle	&U(T,x,m)=G(x,m)\hspace{1.2cm}\mbox{in }\Omega\times\mathcal{P}^{sub}(\Omega)\spazio, \medskip\\
\displaystyle	& \displaystyle U(t,x,m)=0\hspace{1.8cm}\quad\,\,\mbox{for }(t,x,m)\in[0,T]\times\partial\Omega\times\mathcal{P}^{sub}(\Omega)\,,  \medskip \\
& \displaystyle\dm{U}(t,x,m,y)=0\hspace{1.8cm}\mbox{for }(t,x,m,y)\in[0,T]\times\Omega\times\mathcal{P}^{sub}(\Omega)\times\partial\Omega\,,
\end{array}
\right.
\end{split}\end{equation}
where $D_mU$ and $\dm{U}$ are two derivatives, considered with respect to the measure variable $m$, whose precise definition will be given later.

In the literature, an equation like \eqref{Master} is often called the \emph{First order Master Equation}, or \emph{Master Equation without common noise}. 
This terminology reflects the fact that, in the underlying mean field model, the representative agent evolves according to the dynamics \eqref{eq:sde1}, where the stochastic term $dB_t$ accounts for an \emph{idiosyncratic noise}, that is, a source of randomness affecting each individual independently of the others. 
In contrast, when a common source of uncertainty $dW_t$, shared by all agents, is included in the dynamics, one obtains the so-called \emph{Second order Master Equation}, or \emph{Master Equation with common noise}, which involves additional second-order terms in the measure variable.

After being introduced in \cite{LL-japan}, the Master Equation has been studied in many papers, almost always in the periodic case $\Omega=\mathbb{T}^d$, or in the whole space $\Omega=\R^d$ (except for the already cited articles \cite{Ricciardi,Ricciardi3}). In \cite{Buck}, Buckdahn, Li, and Peng proved the well-posedness of the first-order Master equation without coupling terms by means of probabilistic arguments, while  Chassagneux, Crisan, and Delarue, in \cite{28},  provided a first exhaustive existence and uniqueness result of solution, still without common noise. Moreover, Gangbo and Swiech, in \cite{nuova16}, gave a short-time existence for the Master Equation in the presence of common noise.

The most relevant result was certainly given in \cite{card}, where Cardaliaguet, Delarue, Lasry, and Lions proved the well-posedness of the Master Equation, with and without common noise, in a periodic setting.

Moreover,
Carmona and Delarue in \cite{Carmona}
derived convergence results in the whole space, while in  \cite{nuova4,ramadan},
Delarue, Lacker and Ramanan used the Master Equation to analyse the large deviation problem as well as the central limit theorem.
Concerning the major-minor problem, Cardaliaguet, Cirant and Porretta studied a convergence result in \cite{CCP}. For finite state problems, we refer to the works of Bayraktar and Cohen in \cite{nuova1} and by Cecchin and Pelino in \cite{nuova11}. See also the work of Bertucci, Lasry and Lions \cite{BLL2}, who studied the Master Equation for the Finite State Space Planning Problem. More recently, in the setting of MFG with exhaustible resources, i.e. the continuum limit of a dynamic game of exhaustible resources modelling Cournot competition between producers, see e.g. \cite{graber1, graber2} for more details, Graber and Sircar studied the Master Equation for a MFG
of controls with absorption in \cite{graber_me}. Differently from the classical setting studied in this paper where the interaction is through the mean of the state variable, in \cite{graber_me} 
the authors studied a model of Cournot competition between producers whose states depend on the empirical measure over controls.

Other important papers about the Master Equation and the convergence problem are given by \cite{dybala, sonocarmela,checchino,cicciocaputo,jedi,chicazzosiete, fifa21,tonali,jakob,loacker,loacker2,koulibaly,durr}.

Concerning the model with absorption, the MFG system has already been studied in the literature. See, for example, \cite{Campi}, where the authors studied the $ N$-player game and the MFG by means of probabilistic arguments, working directly on the SDEs describing the dynamics and introducing renormalized empirical measures to take care of the loss of mass due to the players leaving the domain. Those results are also used to prove that the optimal strategies of the MFG provide $\eps-$approximated Nash equilibria in the $N$-players game. Some generalizations of the described results were given in \cite{Burzoni}, where a model of bank run is considered within the framework of an MFG model with absorption and common noise, and in \cite{campighiolivieri}, where a dependence on past absorptions is allowed. See also \cite{Diogo1} for an elliptic first-order case.
Similar problems, such as minimal-time MFGs where agents want to leave a given bounded domain through its boundary (or a part of it) in minimal time, with application to crowd motion, have been studied in \cite{MS}. Similarly, in \cite{Bertucci,Bertucci2,BDT}, optimal stopping MFGs were studied for a model where a representative agent chooses both the optimal control and the optimal time to exit the game.
\bigskip

As already mentioned, the main goal of this paper is to establish the well-posedness of the first-order Master Equation \eqref{Master} in a bounded domain with absorbing (Dirichlet) boundary conditions. 
To the best of our knowledge, this is the first existence result for the Master Equation in such a framework. 
The natural setting for this problem is the space $\mathcal{P}^{sub}(\Omega)$ of sub-probability measures, which provides the appropriate structure to describe the dissipation of mass caused by absorption at the boundary.

First of all, we give some comments about the boundary conditions appearing in \eqref{Master}. While the first condition $U(t,x,m)=0$ for $x\in\partial\Omega$ is a natural consequence of the Dirichlet condition \eqref{eq:Dirichlet} for the value function $u$, the condition
\begin{equation}
\frac{\delta U}{\delta m} = 0 \quad \text{on } [0,T)\times \mathcal{P}^{sub}(\Omega)\times \partial\Omega \, , 
\end{equation}
requires an additional comment. While this condition has recently appeared in the mean field control literature with absorption in \cite{card-abs}, its formulation in the MFG context is, as far as we know, completely new in the literature. It relies on the fact that we have to take care not only of the boundary conditions for the state variable $x\in\Omega$, but also for the measure variable $m\in\mathcal P^{sub}(\Omega)$.

The two boundary conditions can be better understood in the $N$-players game. Loosely speaking, assume that $U$ is a smooth function and that it can be extended in the set $[0,T]\times\overline\Omega\times\mathcal P(\overline\Omega)$. Then the boundary condition in the space variable means that, if the game for a generic player starts at $\bar x\in\partial\Omega$, the game is immediately stopped and the value function (provided a vanishing condition of the final cost $G$ at the boundary) vanishes.

For the measure variable, the boundary condition in this case implies that
\begin{equation}
U(t,x,m)=U(t,x,\tilde m)\qquad\mbox{if }m(\Omega)=\tilde m(\Omega).
\end{equation}
In particular, $U(t,x,\delta_y)=U(t,x,\delta_{\tilde y})$ if $y,\tilde y\in\partial\Omega$. Hence, the value function for a player remains unchanged if the players at the boundary change their position. This is quite natural, since the agents, once they reach the boundary, play no role in the game for the other players.

More details on the meaning of these boundary conditions in the $N$-players game are given in the already cited \cite{card-abs} for the mean-field control, and will be given in our forthcoming paper on the convergence problem for MFG with absorption.

We also want to underline that the existence proof for the Master Equation relies on several estimates for the associated Mean Field Games system, and in particular on a careful study of a Fokker–Planck equation with Dirichlet boundary conditions in a very general form. 
As a byproduct, we refine some regularity estimates for such equations that may be of independent interest.
\bigskip

The function $U$ is defined as in \eqref{eq:u2}, and various estimates, such as global bounds and global Lipschitz regularity, are established. Notably, one of the key challenges in demonstrating that $U$ satisfies \eqref{Master} is establishing its $\mathcal{C}^1$ continuity with respect to $m$. This step necessitates a meticulous analysis of the linearized mean field game system (refer to \cite{card,Ricciardi}) to establish strong regularity of $U$ in both the spatial and measure variables.

It is worth highlighting that these regularity estimates demand strong regularity not only in the spatial domain but also in the measure variable.

In the spatial domain, the regularity is derived in \cite{card} through differentiation of the equation with respect to $x$. However, in the case of Dirichlet boundary conditions, as well as in general for any boundary conditions, such methods are not directly applicable. Instead, we obtain these bounds by employing a distinct set of space-time estimates that require careful handling.

Moreover, it is essential to note that regularity estimates for the Dirichlet parabolic equation necessitate compatibility conditions between initial and boundary data. Unfortunately, these compatibility conditions cannot always be guaranteed within this context. Consequently, we extend the estimates obtained in \cite{card} through an in-depth investigation of the regularity of solutions to the Fokker-Planck equation.

A noteworthy novelty in this work is the meticulous study of the Fokker-Planck equation and the linearized Mean Field Games system. This study is conducted within negative-order Hölder spaces, and the well-posedness is established via approximation using smooth functions. However, it is important to emphasize that smooth functions are not dense in the dual of Hölder spaces, as pointed out in \cite{jakob}. As a result, we must confine our analysis to appropriate subsets of these dual spaces. Notably, these ideas can also be applied to fix the issues present in the Neumann case \cite{Ricciardi}.

The article is organized as follows: in Section \ref{sec2}, we introduce the basic notations and hypotheses we will need throughout the article, and we state the two main results we want to prove. In Section \ref{sec3} we give a formal proof for the well-posedness of the Master Equation, which becomes rigorous provided some strong regularity results of the function $U$. In Sections \ref{sec4} and \ref{sec5} we state the latter regularity results for $U$: in particular, Section \ref{sec4} is devoted to the study of the Fokker-Planck equation and the Mean Field Games, whereas in Section \ref{sec5} we analyze a MFG linearized system which provides a suitable regularity for the function $U$. 

\section{Notation and assumptions}
\label{sec2}

Let $T>0$ and $\Omega\subset\R^d$ be the closure of an open, bounded and connected set, with the boundary of class $\mathcal{C}^{2+\alpha}$, for some $\alpha>0$; we denote with $Q_T$  the set $Q_T:=[0,T]\times\Omega$.
We call $d(\cdot)$ the oriented distance function from the boundary of $\Omega$, defined as
\begin{equation}
d(x)=\begin{cases}
\,\,\,\ \mathrm{dist}(x,\partial\Omega) & x\in\Omega\,,\\
-\mathrm{dist}(x,\partial\Omega) & x\notin\Omega\,.
\end{cases}
\end{equation}
Thanks to \cite{DistFunct}, we have $d(\cdot)\in\mathcal C^{2+\alpha}$ in a neighbourhood of the boundary. Since we are only interested in the local character of $d$ near $\partial\Omega$, when we write $d(\cdot)$ in this paper, we mean a $\mathcal C^{2+\alpha}$ function coinciding with $d$ in a neighbourhood of the boundary. 


\subsection{H\"{o}lder spaces}
In what follows, we briefly recall basic notions about the Banach function spaces used throughout the paper, see, e.g., \cite{lsu,Ricciardi}, for more details.

For $n\ge0$ and $\alpha\in(0,1)$,  $\mathcal{C}^{n+\alpha}(\Omega)$ is defined as the space of functions $n$-times differentiable, with derivatives $\alpha$-H\"{o}lder continuous. 
Being $\phi \in \mathcal{C}^{n+\alpha}(\Omega)$, its norm is defined in the following way:
\begin{align*}
\norm{\phi}_{n+\alpha}:=\sum\limits_{\vert \ell\vert \le n}\norminf{D^l\phi}+\sum\limits_{\vert \ell\vert = n}\sup\limits_{x\neq y}\frac{\vert D^\ell\phi(x)-D^\ell\phi(y)\vert }{\vert x-y\vert ^\alpha}\,.
\end{align*}

Similarly, the parabolic space $\mathcal{C}^{\frac{n+\alpha}{2},n+\alpha}(Q_T)$ consists of functions $\phi$ admitting derivatives $D_t^rD_x^s\phi$, with $2r+s\le n$, and with norm
\begin{align*}
\norm{\phi}_{\frac{n+\alpha}{2},n+\alpha}:= \sum\limits_{2r+s\le n} & \norminf{D_t^rD^s_x\phi}+\sum\limits_{2r+s=n}\sup\limits_t\norm{D_t^rD_x^s\phi(t,\cdot)}_{\alpha}\\&+\sum\limits_{0<n+\alpha-2r-s<2}\sup\limits_x\norm{D_t^rD_x^s\phi(\cdot,x)}_{\frac{n+\alpha-2r-s}{2}}\,.
\end{align*}

In order to work with Dirichlet boundary conditions, we define the spaces $\mathcal{C}^{n+\alpha,D}(\Omega)$ and $\mathcal{C}^{\frac{n+\alpha}{2},n+\alpha,D}(Q_T)$ as the subspaces of functions belonging to $\mathcal{C}^{n+\alpha}(\Omega)$, resp. to $\mathcal{C}^{\frac{n+\alpha}{2},n+\alpha}(Q_T)$,  vanishing at $\partial\Omega$. When there is no risk of confusion, we will omit the set $\Omega$ or $Q_T$.

Analogously, we define the spaces $\mathcal{C}^{0,\alpha}$, $\mathcal{C}^{\alpha,0}$, $\mathcal{C}^{1,2+\alpha}$. 
For the sake of completeness, also because of its relevance throughout the paper, let us specify the norm equipping the latter functional space:
$$
\norm{\phi}_{1,2+\alpha}:=\norminf{\phi}+\norm{\phi_t}_{0,\alpha}+\norminf{D_x\phi}+\norm{D^2_x\phi}_{0,\alpha}\,.
$$

Eventually, we have to work with suitable subsets of the dual spaces of $\mathcal{C}^{n+\alpha}$ and  $\mathcal{C}^{n+\alpha,D}$. In particular, we denote as $\mathcal{C}^{-(n+\alpha)}$ the set
\[
\mathcal{C}^{-(n+\alpha)}:=\left\{ \rho\in\left(\mathcal C^{(n+\alpha)}\right)'\,\Big|\,\langle\rho,\phi\rangle=\sum\limits_{|\gamma|\le n}\into\partial^\gamma\phi(x)\,\rho_\gamma(dx)\quad\forall\phi\in\mathcal C^{n+\alpha}\,,\quad {(\rho_\gamma)}_\gamma\mbox{ measures on }\Omega\right\} \, .\\
\]
In the same way we define the set $\mathcal{C}^{-(n+\alpha),D}$ and $\mathcal C^{-\frac\alpha2,-\alpha}$. The norms on these spaces are inherited by the classical dual spaces norm:
\begin{gather*}
\norm{\rho}_{-(n+\alpha)}=\sup\limits_{\norm{\phi}_{n+\alpha}\le 1}\langle\rho,\phi\rangle\,,\qquad\norm{\rho}_{-(n+\alpha),D}=\sup\limits_{\norm{\phi}_{n+\alpha,D}\le 1}\langle\rho,\phi\rangle\,,\\
\norm{\rho}\aam=\sup\limits_{\norm{\phi}\am\le1}\langle\rho,\phi\rangle \, .
\end{gather*}
The importance of these subspaces is given by the following result.
\begin{lemma}\label{prop:espen}
The space $\mathcal{C}^{-(n+\alpha)}$ is a norm-closed subset of $\left(\mathcal C^{n+\alpha}\right)'$. Moreover, if $\rho\in\mathcal C^{-n}$, then there exist a sequence $\{\rho_k\}_k\subset\mathcal C^{2+\alpha,D}$ such that $\rho_k\to\rho$ in $\mathcal C^{-(n+\alpha)}$.
\end{lemma}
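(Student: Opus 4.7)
My plan is to handle the two assertions of the lemma separately, using quite different techniques. For the norm-closedness of $\mathcal{C}^{-(n+\alpha)}$ in $(\mathcal{C}^{n+\alpha})'$, I would start with a Cauchy sequence $\{\rho_k\}\subset\mathcal{C}^{-(n+\alpha)}$ with $\rho_k\to\rho$ in $(\mathcal{C}^{n+\alpha})'$, and aim to exhibit representing measures $(\rho_\gamma)_{|\gamma|\le n}$ for the limit $\rho$. The key preliminary step is to select, for each $k$, measures $(\rho_{k,\gamma})_{|\gamma|\le n}$ representing $\rho_k$ such that $\sum_{\gamma}\|\rho_{k,\gamma}\|_{TV}\le C\,\|\rho_k\|_{-(n+\alpha)}$ with $C$ independent of $k$. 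This I would obtain by viewing $\mathcal{C}^{-(n+\alpha)}$ as the image of the continuous linear map
\[
\Psi\colon\bigoplus_{|\gamma|\le n}M(\Omega)\;\longrightarrow\;(\mathcal{C}^{n+\alpha})'\,,\qquad \Psi\big((\mu_\gamma)\big)=\sum_\gamma\partial^\gamma\mu_\gamma\,,
\]
and endowing the image with the quotient norm inherited from $\bigoplus_\gamma M(\Omega)/\ker\Psi$; the equivalence of this quotient norm with the induced dual norm delivers the uniform selection. With the total-variation bound in hand, Banach--Alaoglu produces weak-$*$ cluster points $\rho_\gamma$ for each $\gamma$, and passing to the limit in $\langle\rho_k,\phi\rangle=\sum_\gamma\int_\Omega\partial^\gamma\phi\,d\rho_{k,\gamma}$ on a countable dense family of test functions $\phi\in\mathcal{C}^{n+\alpha}$ identifies $\rho$ with $\sum_\gamma\partial^\gamma\rho_\gamma$, placing $\rho$ in $\mathcal{C}^{-(n+\alpha)}$.

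For the density statement I would approximate $\rho\in\mathcal{C}^{-n}$ by combining an inward contraction of its support with a standard mollification. Concretely, let $T_k:\Omega\to T_k(\Omega)\subset\{x\in\Omega:d(x)>2/k\}$ be a smooth family of diffeomorphisms obtained by flowing along an inward-pointing vector field near $\partial\Omega$, with $T_k\to\mathrm{id}$ at rate $1/k$ in $\mathcal{C}^1$, and let $\eta_{1/k}$ be a smooth mollifier supported in a ball of radius $1/k$. Set $\rho_k:=({T_k}_{\#}\rho)\ast\eta_{1/k}$, the pushforward acting on the representing measures of $\rho$. By construction $\rho_k\in\mathcal{C}^\infty\subset\mathcal{C}^{2+\alpha}$ is supported in $\{d(x)>1/k\}$, hence $\rho_k$ vanishes on $\partial\Omega$. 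To verify convergence $\rho_k\to\rho$ in $\mathcal{C}^{-(n+\alpha)}$, I would test against $\phi\in\mathcal{C}^{n+\alpha}$ with $\|\phi\|_{n+\alpha}\le 1$ and rewrite
\[
\langle\rho_k-\rho,\phi\rangle=\big\langle\rho\,,\;\eta_{1/k}\ast(\phi\circ T_k^{-1})-\phi\big\rangle\,;
\]
the standard shift-plus-mollification estimate, together with the $\alpha$-Hölder regularity of $\partial^\gamma\phi$ for $|\gamma|=n$, then yields a uniform bound of order $k^{-\alpha}\,\|\rho\|_{-n}$.

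The main obstacle is the uniform total-variation bound $\sum_\gamma\|\rho_{k,\gamma}\|_{TV}\le C\|\rho_k\|_{-(n+\alpha)}$ underlying the closedness proof: since the decomposition of an element of $\mathcal{C}^{-(n+\alpha)}$ as a sum of derivatives of measures is intrinsically non-unique, securing a universal constant depending only on the dual norm of $\rho_k$ demands the careful quotient-space/open-mapping argument mentioned above rather than any direct construction. Once this uniform selection is in place, the Banach--Alaoglu compactness and the passage to the limit for the closedness part are routine, and the density part proceeds cleanly from the inward-contraction-plus-mollification scheme, the $\alpha$-Hölder gain being carried entirely by the test functions.
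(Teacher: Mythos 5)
Your treatment of the density statement is essentially sound and takes a genuinely different route from the paper: the paper mollifies the representing measures (extended by zero outside $\Omega$), invokes external lemmas for smoothness and for convergence in $\mathcal C^{-(n+\beta)}$, and then enforces the boundary condition \emph{a posteriori} by multiplying with a cutoff $\xi_k(d(\cdot))$, whereas you enforce it \emph{a priori} by first pushing the representing measures inward with a diffeomorphism $T_k$ and only then mollifying, so that $\rho_k$ vanishes identically near $\partial\Omega$ by support considerations. Your scheme neatly avoids having to check that the cutoff multiplication preserves convergence. One small slip: the adjoint of your construction acting on a test function $\phi$ is $\bigl(\tilde\eta_{1/k}\ast\phi\bigr)\circ T_k$ (with $\tilde\eta(x)=\eta(-x)$), not $\eta_{1/k}\ast(\phi\circ T_k^{-1})$; the $k^{-\alpha}$ estimate goes through either way, but the duality identity as written is not correct.

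The closedness part, however, contains a genuine gap, and it is exactly the point you flag as the ``main obstacle.'' You propose to obtain the uniform selection $\sum_\gamma\|\rho_{k,\gamma}\|_{TV}\le C\,\|\rho_k\|_{-(n+\alpha)}$ from the equivalence of the quotient norm on $\bigoplus_\gamma M(\Omega)/\ker\Psi$ with the dual norm induced on $\mathrm{Im}(\Psi)$. But that equivalence is precisely what cannot be assumed: the induced map $\tilde\Psi$ from the (complete) quotient into $(\mathcal C^{n+\alpha})'$ is a continuous injection, and the open mapping theorem yields boundedness of $\tilde\Psi^{-1}$ only if its range, i.e.\ $\mathcal{C}^{-(n+\alpha)}$ equipped with the dual norm, is already complete --- which is the closedness you are trying to prove. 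Ranges of bounded operators between Banach spaces need not be closed, and a continuous linear bijection onto a non-closed range has unbounded inverse, so the quotient-norm/open-mapping step is circular rather than merely ``careful.'' (Applying the open mapping theorem instead to the genuinely surjective map onto $(\mathcal C^{n})'$ gives a TV bound in terms of $\|\rho_k\|_{(\mathcal C^{n})'}$, but a sequence converging in $(\mathcal C^{n+\alpha})'$ need not be bounded in that stronger dual norm, so this does not rescue the argument.) The paper does not attempt this at all: it simply cites Lemma 2.6 of the reference \cite{jakob} for the closedness, and the substantive content of that lemma is exactly the uniform bound your argument presupposes. As it stands, the first assertion of the lemma is not proved by your proposal.
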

\begin{proof}
Thanks to \cite[Lemma 2.6]{jakob}, we know that $\mathcal{C}^{-(n+\alpha)}$ is a norm-closed subset of $\left(\mathcal C^{n+\alpha}\right)'$. Let $\rho\in\mathcal C^{-n}$. Then there exist measures $\{\rho_\gamma\}_{|\gamma|\le n}$ such that
$
\rho=\sum\limits_{|\gamma|\le n}\partial^\gamma\rho_\gamma(dx)\,.
$ 
Defining $\rho_\gamma(\R^d\setminus\Omega)=0$, we can consider $\{\rho_\gamma\}_\gamma$ as measures on $\R^d$.

Let $\{n_\eps\}_\eps$ be smooth mollifiers. We define $\rho_\gamma^k:=\rho_\gamma*\eta_{\frac1k}$ and $\tilde\rho_k:=\sum\limits_{|\gamma|\le n}\partial^\gamma\rho^k_\gamma(dx)\,.$ Thanks to \cite[Lemma 2.3, Lemma 2.7]{jakob}, we have $\tilde\rho_k\in\mathcal C^\infty$ and $\tilde\rho_k\to\rho$ in $\mathcal C^{-(n+\beta)}$ for all $\beta>0$.

Then, we consider a nonnegative smooth function $\xi_k(s)\in\mathcal C^\infty(\R)$ such that $\xi_k(s)=1$ for $|s|\ge\frac1k$ and $\xi_k(0)=0$. We define $\rho_k(x)=\tilde\rho_k(x)\,\xi_k(d(x))$. Then we have $\rho_k\in\mathcal C^{2+\alpha,D}$ and the convergence to $\rho$ in $\mathcal C^{-(n+\alpha)}$ is preserved.
\end{proof}

\subsection{Subprobability Measures and Generalized Wasserstein Measure}
\label{subp}

Let us start the present subsection with a {\it proper}
notion of distance for elements belonging  to $\mathcal{P}^{sub} (\Omega)$.
Since the usual Wasserstein distance $W_p(\mu, \nu)$ is defined only when the two measures $\mu, \nu$ have the same mass, in what follows we rely on the so-called {\it generalized Wasserstein distance} which allows computing distance between measures with different masses, see e.g. \cite{PiccoliRossi, PiccoliRossi2} for a complete description and \cite{FigalliGigli,Santambrogio} for the corresponding probabilistic interpretation based on unbalanced optimal transport.

According to \cite{PiccoliRossi2}, we also introduce the following definition, which is the one we will use throughout the article.

%
%
%
%
%


\begin{definition}
Let $m_1,m_2\in\mathcal{P}^{sub}(\Omega)$ be two Borel sub-probability measures on $\Omega$. We call the generalized Wasserstein distance between $m_1$ and $m_2$, and we write $\dw(m_1,m_2)$ the quantity
\begin{equation}\label{wass1}
\dw(m_1,m_2):=\sup\limits_{Lip(\phi)\le 1 \, , \, \norm{\phi}_{C^0} \leq 1} \, \into \phi(x)d(m_1-m_2)(x)\,,
\end{equation}
\end{definition}

We remark that $\dw(m_1,m_2)$ is equivalent to the well-known {\em flat metric} over the space of Radon measures with finite mass on $\Omega$.

Furthermore, we define a suitable derivation of $U$ with respect to the (sub)measure $m$ as it appears in the Master Equation \eqref{Master}. Namely, we have:

\begin{definition}\label{dmu}
Let $U:\mathcal{P}^{sub}(\Omega)\to\R$. We say that $U$ is of class $\mathcal{C}^1$ if there exists a continuous map $K:\mathcal{P}^{sub}(\Omega)\times\Omega\to\R$ such that, for all $m_1$, $m_2\in\mathcal{P}^{sub}(\Omega)$ we have
\begin{equation}\label{deu}
\lim\limits_{s\to0}\frac{U(m_1+s(m_2-m_1))-U(m_1)}{s}=\into K(m_1,x)(m_2(dx)-m_1(dx))\,.
\end{equation}
\end{definition}

We define $\displaystyle\dm{U}(m,x):=K(m,x)$.

\begin{remark}
Condition \eqref{deu} defines uniquely the derivative $K$. Conversely, in the classical setting, where $U:\mathcal P(\Omega)\to\R$, see e.g. \cite{card,Ricciardi}, the function $K$ is defined up to an additive constant, since, for $c\in\R$ and $m_1,m_2\in\mathcal P(\Omega)$, $\into c\,d(m_2-m_1)=0$. Hence, in those cases a normalization condition is needed to uniquely identify $K$.
\end{remark}
%
Let us note that, exploiting Eq. \eqref{deu}, we have the following equality for $m_1,\,m_2\in\mathcal{P}^{sub}(\Omega)$, 	$$
U(m_2)-U(m_1)=\int_0^1\into\dm{U}(m_1+s(m_2-m_1),x)(m_2(dx)-m_1(dx))ds\,.
$$
which turns out to be very useful in our computations.
Moreover, we note that, if $\dm{U}$ is $\mathcal{C}^1$ in the space variable, we have
$$
\vert U(m_2)-U(m_1)\vert \le \sup\limits_m\left(\norminf{D_x\dm{U}(m,\cdot)}+\norminf{\dm{U}(m,\cdot)}\right)\dw(m_1,m_2)\,,
$$
suggesting us define the \emph{intrinsic derivative} of $U$ with respect to $m$, which directly appears in the Master Equation \eqref{Master}.
\begin{definition}\label{Dmu}
Let $U:\mathcal{P}^{sub}(\Omega)\to\R$. If $U$ is of class $\mathcal{C}^1$ and $\dm{U}$ is of class $\mathcal{C}^1$ with respect to the last variable, we define the intrinsic derivative $D_mU:\mathcal{P}^{sub}(\Omega)\times\Omega\to\R^d$ as
$$
D_mU(m,x):=D_x\dm{U}(m,x)\,.
$$
\end{definition}

\subsection{Assumptions}

We require the following hypotheses throughout the paper:

\begin{hp}\label{ipotesi}
Let $0<\alpha<1$. Assume that
\begin{itemize}
\item [(i)] $\norm{a(\cdot)}_{1+\alpha}<\infty$ and $\exists\,\mu>\lambda>0$ s.t. $\forall\xi\in\mathbb{R}^d$ $$ \lambda\vert \xi\vert ^2\le\langle a(x)\xi,\xi\rangle\le\mu\vert \xi\vert ^2\,;$$
\item [(ii)]$H:[0,T]\times\Omega\times\R^d\to\R$, $G:\Omega\times\mathcal{P}^{sub}(\Omega)\to\R$ and $F:[0,T]\times\Omega\times\mathcal{P}^{sub}(\Omega)\to\R$ are smooth functions with $H$ locally Lipschitz with respect to the last variable;
\item [(iii)]$\exists\, C>0$ s.t.
$$
0< H_{pp}(t,x,p)\le C I_{d\times d}\spazio;
$$

\item[(iv)] $F$ is increasing in the last variable, i.e.
$$
\into \left(F(t,x,m)-F(t,x,m')\right) d(m-m')(x)\ge0\,;
$$
moreover
$$
\sup\limits_{m\in\mathcal{P}^{sub}(\Omega)}\left(\norm{F(\cdot,\cdot,m)}_{\frac\alpha 2,\alpha}+\norm{\frac{\delta F}{\delta m}(\cdot,\cdot,m,\cdot)}_{\frac\alpha 2,\alpha,2+\alpha}\right)+\mathrm{Lip}\left(\dm{F}\right)\le C_F\spazio,
$$
with
$$
\begin{array}{l}
\displaystyle \mathrm{Lip}  \left(\dm{F}\right) := \smallskip \\
\hspace{0.2cm}	\displaystyle\sup\limits_{m_1\neq m_2}\left(\norm{m_1-m_2}_{-(2+\alpha),D}^{-1}\norm{\dm{F}(\cdot,\cdot,m_1,\cdot)-\dm{F}(\cdot,\cdot,m_2,\cdot)}_{\frac\alpha 2,\alpha,2+\alpha}\right);
\end{array}
$$

\item [(v)]
$G$ satisfies the same estimates as $F$ with $\alpha$ and $1+\alpha$ replaced by $2+\alpha$, i.e.
$$
\sup\limits_{m\in\mathcal{P}^{sub}(\Omega)}\left(\norm{G(\cdot,m)}_{2+\alpha}+\norm{\frac{\delta G}{\delta m}(\cdot,m,\cdot)}_{2+\alpha,2+\alpha}\right)+\mathrm{Lip}\left(\dm{G}\right)\le C_G\spazio,
$$
with
$$
\begin{array}{l}
\displaystyle 
\mathrm{Lip}\left(\dm{G}\right):=
\smallskip \\
\hspace{0.5cm}	\displaystyle
\sup\limits_{m_1\neq m_2}\left(\norm{m_1-m_2}_{-(2+\alpha),D}^{-1}\norm{\dm{G}(\cdot,m_1,\cdot)-\dm{G}(\cdot,m_2,\cdot)}_{2+\alpha,2+\alpha}\right)\spazio;
\end{array}
$$

\item[(vi)]

Besides the classical monotonicity assumption, we ask $F$
and $G$ to satisfy the following

$$
\Bigg\langle \Bigg\langle \frac{\delta F(t, \cdot,  m, \cdot)}{\delta m}, \rho \Bigg\rangle_y, \rho \Bigg\rangle_x \geq 0, \quad  
\rho \in \mathcal{C}^{-(2+\alpha),D}(\Omega) , 
$$

for all $m\in\mathcal P^{sub}(\Omega)$ and for all $t \in [0,T]$.

\item [(vii)] Moreover, we require the following Dirichlet boundary conditions:
\begin{align*}
\dm{F}(t,x,m,y)_{\vert y\in\partial\Omega}=0\,,\qquad \dm{G}(x,m,y)_{\vert y\in\partial\Omega}=0\,,\qquad
G(x,m)_{\vert x\in\partial\Omega}=0\,,
\end{align*}
for all $m\in\mathcal{P}^{sub}(\Omega)$.
\end{itemize}
\end{hp}
We note that in hypotheses $(vii)$ we have two standard compatibility assumptions for $G$ and $\dm{G}$, naturally linked to the boundary conditions framework. In particular, the condition on $G$ is essential to have a classical solution for both the Master Equation \eqref{Master} and the MFG system \eqref{eq:mfg}, whereas the condition on $\dm{G}$ is a compatibility condition for the linearized MFG system, see e.g. \emph{Corollary} \ref{delarue}.\\

We underline that condition on $\dm F$ seems unusual at a first sight. Technically speaking, this condition plays a crucial role to prove the analogue Dirichlet boundary condition of $\dm U$, and it is also mandatory to prove the well-posedness of the linearized MFG system in Proposition \ref{linearD}.

The conditions on $\dm F$ and $\dm G$ have also a meaning in terms of the $N$-players game. Roughly speaking, and arguing as in the introduction, assume that $F$ and $G$ are smooth functions, extended in the set $[0,T]\times\overline\Omega\times\mathcal P(\overline\Omega)$. The boundary conditions or $\dm F$ and $\dm G$ imply that
$$
F(t,x,m)=F(t,x,\tilde m)\quad\mbox{and}\quad  G(t,x,m)=G(t,x,\tilde m)\qquad\mbox{if}\quad m(\Omega)=\tilde m(\Omega).
$$
Hence, the cost functions for a player do not change if the players at the boundary change their position, which is totally natural in our model. Actually, the agents at the boundary play no role for the value function of the other players, and their specific position at the boundary must be unrelevant for the agents which are still playing their game.

\section{Well-posedness of the Master Equation}
\label{sec3}
We recall the definition of the function $U$, which turns out to be the solution of \eqref{Master}. Let $(t_0,m_0)\in[0,T]\times\mathcal P^{sub}(\Omega)$, and consider the solution $(u,m)$ of the MFG system with initial condition $m(t_0)=m_0$, i.e.
\begin{equation}
\label{eq:mfg}
\begin{cases}
\displaystyle - u_t - \mathrm{tr} (a(x) D^2 u) + H (t,x, Du) = F(t,x,m(t)) \, ,  \\
\displaystyle m_t - \nobt{m} - \mathrm{div} ( m H_p (t,x, Du)) = 0 \, , \\
m (t_0) = m_0, \hspace{1.8cm} u(x,T) = G(x, m(T)) \, , \\
\displaystyle \bdone{u}\,,\qquad\bdone{m}\,,
\end{cases}
\end{equation}
where, as always, a backward Hamilton-Jacobi-Bellman equation for the value function $u$ of the generic player is coupled with a forward Fokker-Planck equation for the density $m$ of the population. Then we define $U:[0,T]\times\Omega\times\mathcal P^{sub}(\Omega)\to\R$ as
\begin{equation}\label{eq:u}
U(t_0,x,m_0)=u(t_0,x) \, ,
\end{equation}

and we want to prove that, under certain assumptions, $U$ is the solution of the Master Equation \eqref{Master}.

A formal proof of existence and uniqueness of solutions for \eqref{Master} can be easily given directly using the very definition of $U$. The difficulty here relies on the proof of the $\mathcal{C}^1$ character of $U$ with respect to $m$. First, we establish the required regularity for $U$ and a related compatibility condition based on the boundary conditions. The existence and uniqueness theorem are rigorously proved, as we see in detail in the following theorem.

\begin{theorem}\label{settepuntouno}
Suppose hypotheses \ref{ipotesi} are satisfied. Take $U$ defined as in \eqref{eq:u}, suppose that $\dm{U}$ exists, being also bounded in $\mathcal{C}^{2+\alpha}_x\times C^{2+\alpha}_y$, uniformly in $t$ and continuously in $m$, and that boundary conditions of \eqref{Master} hold for $U$. Then $U$ is the unique classical solution $U$ of the Master Equation \eqref{Master}.

\begin{proof}
\emph{Existence}. By a density argument, we only need to prove that $U$ solves \eqref{Master} at $(t_0,x,m_0$), where $m_0$ is a smooth and positive function satisfying $\bdone{m_0}$ and $\into m_0(x)dx\le 1$.
Taking $(u,m)$ as the solution of  the $MFG$ system \eqref{eq:mfg} starting from $m_0$ at time $t_0$, by the definition of $U$, we have that for all $s>t_0$
$$
U(s,x,m(s))=u(s,x)\,,
$$
therefore, we can write
$$
\partial_t U(t_0,x,m_0)=\lim\limits_{h\to0} \frac{U(t_0+h,x,m_0)-U(t_0+h,x,m(t_0+h))}{h}+u_t(t_0,x)\,.
$$
Again by the very definition of $U$:
\begin{align*}
u_t(t_0,x)=-\mathrm{tr}(a(x)D^2_xU(t_0,x,m_0))+H(t_0,x,D_xU(t_0,x,m_0))-F(t_0,x,m_0)\,.
\end{align*}
As regards the limit part, we define $m_s:=(1-s)m(t_0)+sm(t_0+h)$. Since $U$ is $\mathcal{C}^1$   with respect to $m$, then:
\begin{align*}
& -\lim\limits_{h\to 0}\int_0^1\into\dm{U}(t_0+h,x,m_s,y)\frac{(m(t_0+h,y)-m(t_0,y))}h\,dy ds = \\
& 	\qquad - \into\dm{U}(t_0,x,m_0,y) \Bigg(\nobt{m(t_0,y)} \\ &
\hspace{4cm}  +\mathrm{div}\big(m(t_0,y)H_p(t_0,y,Du(t_0,y))\big)\Bigg)dy\,.
\end{align*}
Integrating by parts, and using the boundary conditions of $\dm{U}$ and $m$, we can rewrite the right-hand side as
\begin{align*}
&	\into\Bigg[H_p(t_0,y,D_x U(t_0,y,m_0))D_mU(t_0,x,m_0,y) \\ & \hspace{6cm}-\mathrm{tr}\Big(a(y)D_y D_mU(t_0,x,m_0,y)\Big)\Bigg]dm_0(y)\,,
\end{align*}
therefore
\begin{align*}
\displaystyle &\partial_t U(t,x,m)=-\mathrm{tr}\left(a(x)D_x^2 U(t,x,m)\right)+H\left(t,x,D_x U(t,x,m)\right)\\
\displaystyle & \quad -{\into}\mathrm{tr}\left(a(y)D_y D_m U(t,x,m,y)\right)dm(y)\\
\displaystyle & \quad + {\into} D_m U(t,x,m,y)\cdot H_p(t,y,D_x U(t,y,m))dm(y)- F(t,x,m)\,,
\end{align*}
hence concluding the existence part.

\emph{Uniqueness}. Suppose that there exists a second solution $V$ of \eqref{Master}. We fix $(t_0,m_0)$, with $m_0$ smooth s.t. $\bdone{m_0}$, and we take  $\tilde{m}$ as the solution of
\begin{equation*}
\begin{cases}
\tilde{m}_t-\nobt{\tilde{m}}-\mathrm{div}\big(\tilde{m}H_p(t,x,D_xV(t,x,\tilde{m}))\big)=0\,,\\
\tilde{m}(t_0)=m_0\,,\\
\bdone{\tilde{m}}\,
\end{cases}
\end{equation*}
which is well-posed since $D_xV$ is Lipschitz continuous with respect to the measure. Then we define $\tilde{u}(t,x)=V(t,x,\tilde{m}(t))$ and we want to prove that $\tilde{u}$ solves a Hamilton-Jacobi equation. Using the equations of $V$ and $\tilde{m}$, we get
\begin{align*}
\tilde{u}_t(t,x)=&\,V_t(t,x,\tilde{m}(t))+\into\dm{V}(t,x,\tilde{m}(t),y)\,\tilde{m}_t(t,y)\,dy\\
=&\,V_t(t,x,\tilde{m}(t))+\into\dm{V}(t,x,\tilde{m}(t),y)\,\nobt{\tilde{m}(t,y)}\,dy\\+&\,\into \dm{V}(t,x,\tilde{m}(t),y)\,\mathrm{div}\big(\tilde{m}H_p(t,x,D_xV(t,x,\tilde{m}))\big)\,dy\,.
\end{align*}
Previous integrals can be easily estimated with an integration by parts, and taking into account the boundary condition for $V$. As regards the first, we have
\begin{align*}
&\into\dm{V}(t,x,\tilde{m}(t),y)\,\nobt{\tilde{m}(t,y)}\,dy=\smallskip  \\
\displaystyle & \qquad \into\mathrm{tr}(a(y)D_yD_mV(t,x,\tilde{m}(t),y))\,\tilde{m}(t,y)\,dy\,,
\end{align*}

while for the second
\begin{align*}
\displaystyle	&\into \dm{V}(t,x,\tilde{m}(t),y)\,\mathrm{div}\big(\tilde{m}H_p(t,x,D_xV(t,x,\tilde{m}))\big)\,dy \smallskip \\
\displaystyle & \qquad =-\into H_p(t,x,D_xV(t,x,\tilde{m}))D_mV(t,x,\tilde{m},y)\tilde{m}(t,y)dy\,.
\end{align*}
Previous estimates allow us to write
\begin{align*}
\tilde{u}_t(t,x)=&\,V_t(t,x,\tilde{m}(t))+\into\mathrm{tr}(a(y)D_yD_mV(t,x,\tilde{m},y))\,d\tilde{m}(y) \smallskip \\ & \qquad -\,\into H_p(t,y,D_xV(t,y,\tilde{m})) D_mV(t,x,\tilde{m},y)\,d\tilde{m}(y)\smallskip \\=&\,-\mathrm{tr}(a(x)D^2\tilde{u}(t,x))+H(t,x,D\tilde{u}(t,x))-F(t,x,\tilde{m}(t))\,,
\smallskip			\end{align*}
hence $(\tilde{u},\tilde{m})$ is a solution of the MFG system \eqref{eq:mfg}. Then, by uniqueness of solutions for the MFG system, we get $(\tilde{u},\tilde{m})=(u,m)$, implying that $V(t_0,x,m_0)=U(t_0,x,m_0)$, whenever $m_0$ is smooth. Therefore, by density, the uniqueness is proved.
\end{proof}
\end{theorem}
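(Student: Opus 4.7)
The plan is to split the argument into existence and uniqueness, both resting on the chain rule for $U$ along the MFG flow together with the Fokker--Planck equation, the Hamilton--Jacobi--Bellman equation, and integration by parts using the Dirichlet data. The hypothesised regularity ($\dm{U}$ bounded in $\mathcal{C}^{2+\alpha}_x\times \mathcal{C}^{2+\alpha}_y$ uniformly in $t$, continuous in $m$) is exactly what legitimises these computations; compactness of $\mathcal{P}^{sub}(\Omega)$ under $\dw$ then lets us restrict attention, by density, to smooth initial data $m_0$ satisfying $\bdone{m_0}$ and $\into m_0\,dx\le 1$.

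\textbf{Existence.} For such a smooth $m_0$, let $(u,m)$ be the solution of the MFG system \eqref{eq:mfg} on $[t_0,T]$ with datum $m_0$. Since $u(s,\cdot)=U(s,\cdot,m(s))$ by the very definition of $U$, I would write the identity
\begin{equation*}
\partial_t U(t_0,x,m_0)=u_t(t_0,x)+\lim_{h\to 0}\frac{U(t_0+h,x,m_0)-U(t_0+h,x,m(t_0+h))}{h}.
\end{equation*}
The first summand is handled by the HJB equation for $u$, which already produces $-\tr{U(t_0,x,m_0)}+H(t_0,x,D_xU(t_0,x,m_0))-F(t_0,x,m_0)$. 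For the second, I would use the $\mathcal{C}^1$ regularity in $m$ along the interpolating path $m_s=(1-s)m(t_0)+sm(t_0+h)$ to rewrite the difference as $\int_0^1\int_\Omega \dm{U}(t_0+h,x,m_s,y)\frac{m(t_0+h,y)-m(t_0,y)}{h}\,dy\,ds$, pass to the limit using continuity of $\dm{U}$ in $m$, and plug in the Fokker--Planck equation $m_t=\nobt{m}+\dive(mH_p(t,\cdot,Du))$. Two integrations by parts then move derivatives onto $\dm{U}$; the boundary terms vanish because $\dm{U}(t,x,m,\cdot)=0$ on $\partial\Omega$ by hypothesis and $m(t,\cdot)=0$ on $\partial\Omega$. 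The derivatives that reach the $y$-variable turn into $D_m U$ (by Definition \ref{Dmu}) and $D_yD_mU$, reproducing exactly the non-local terms of \eqref{Master}.

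\textbf{Uniqueness.} Given a second classical solution $V$ with the same regularity, I would freeze its spatial gradient and construct $\tilde m$ as the (unique) weak solution of
\begin{equation*}
\tilde m_t-\nobt{\tilde m}-\dive\bigl(\tilde m\,H_p(t,x,D_xV(t,x,\tilde m))\bigr)=0,\qquad \tilde m(t_0)=m_0,\qquad \bdone{\tilde m},
\end{equation*}
well-posed thanks to the Lipschitz dependence of $D_xV$ on $m$. Setting $\tilde u(t,x):=V(t,x,\tilde m(t))$, differentiating in $t$ via the chain rule and substituting both the Master Equation for $V$ and the Fokker--Planck equation for $\tilde m$, the non-local integrals cancel after one integration by parts (again using the Dirichlet conditions on $\dm{V}$ and $\tilde m$), leaving $\tilde u_t=-\tr{\tilde u}+H(t,x,D\tilde u)-F(t,x,\tilde m(t))$. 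Thus $(\tilde u,\tilde m)$ solves \eqref{eq:mfg}; by uniqueness of the MFG system we conclude $\tilde u=u$ and $\tilde m=m$, so $V(t_0,x,m_0)=\tilde u(t_0,x)=u(t_0,x)=U(t_0,x,m_0)$ for smooth $m_0$, and the equality extends to all $m_0\in\mathcal{P}^{sub}(\Omega)$ by density and continuity of $U$ and $V$ in $m$.

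\textbf{Main obstacle.} The delicate point is ensuring that the boundary terms generated by the two integrations by parts really do vanish: this is where the new hypothesis $\dm{F}|_{y\in\partial\Omega}=0,\ \dm{G}|_{y\in\partial\Omega}=0$, together with $m|_{\partial\Omega}=0$ and the assumed Dirichlet condition for $\dm{U}$, are indispensable. A secondary technical point is justifying the passage to the limit $h\to 0$ in the interpolated integral, which requires the joint continuity of $\dm{U}$ in $(m,y)$ plus the regularity $m\in \mathcal{C}^0([t_0,T];L^1(\Omega))$ for the Fokker--Planck solution; both are in place under Hypotheses \ref{ipotesi} and the standing assumption on $U$.
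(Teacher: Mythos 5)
Your proposal is correct and follows essentially the same route as the paper's own proof: the same decomposition of $\partial_t U$ into the HJB part plus the limit along the interpolation $m_s=(1-s)m(t_0)+sm(t_0+h)$, the same substitution of the Fokker--Planck equation and integration by parts using the Dirichlet data for $\dm{U}$ and $m$, and the same uniqueness argument via $\tilde m$ and $\tilde u(t,x)=V(t,x,\tilde m(t))$ reduced to uniqueness for the MFG system. Nothing essential is missing.
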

In the next two sections, we will prove the $\mathcal{C}^1$ character of $U$ with respect to $m$, the $\mathcal{C}^2$ regularity in $x$ and $y$ of $\dm{U}$ and the boundary conditions for $U$, hence justify the hypotheses previously given, also allowing us to then apply Theorem \ref{settepuntouno} and finally prove the well-posedness of \eqref{Master}.

\section{The Fokker-Planck equation and the MFG system}
\label{sec4}

Let us first give the following technical lemma, providing a regularity result for a linear PDE with non-homogeneous drift.
\begin{lemma}\label{sonobravo}
Suppose $a$ satisfies hypothesis $\textit{(i)}$ of \ref{ipotesi} ,  let $q=\frac{d+2}{1-\alpha}$, $b,f\in L^q(Q_T)$, $\psi\in\mathcal{C}^{1+\alpha,D}(\Omega)$, and $z$ be the solution of 
\begin{equation*}
\begin{cases}
-z_t-\mathrm{tr}(a(x)D^2z)+b(t,x)\cdot Dz=f(t,x)\,,\\
z(T)=\psi\,,\\
z_{\vert \partial\Omega}=0\,.
\end{cases}
\end{equation*}
Then $z$ satisfies, for a certain $C>0$ depending on $\norm{a}_{L^q}$ and $\norm{b}_{L^q}$,
\begin{equation}\label{estensione}
\norm{z}\amu\le C\left(\norm{f}_{L^q}+\norm{\psi}_{1+\alpha}\right)\,.
\end{equation}
Moreover, if $b, f\in\mathcal{C}^{0,\alpha}(Q_T)$ and $\psi\in\mathcal{C}^{2+\alpha,D}(\Omega)$, it holds
\begin{equation}\label{espansione}
\norm{z}\amv\le C\left(\norm{f}_{0,\alpha}+\norm{\psi}_{2+\alpha}\right)\,.
\end{equation}
\begin{proof}
We exploit ideas stated in \cite{Ricciardi}. In particular, if $f\in\mathcal{C}(Q_T)$, $b\in\mathcal{C}(\Omega)$, then we conclude with \emph{Theorem 5.1.1} of \cite{lunardi}. While, in the general case, we write $z=z_1+z_2$, where $z_1$ and $z_2$ respectively satisfy: 

\begin{equation*}
\begin{cases}
-{(z_1)}_t-\mathrm{tr}(a(x)D^2z_1)=0\,, \smallskip \\
z_1(T)=\psi\,,\smallskip \\
{z_1}_{\vert \partial\Omega}=0\,, \smallskip
\end{cases}
\end{equation*}
and 		
\begin{equation*}
\begin{cases}
-{(z_2)}_t-\mathrm{tr}(a(x)D^2z_2)+b\cdot Dz_2=f-b\cdot Dz_1\,,\smallskip \\
z_2(T)=0\,, \smallskip \\
{z_2}_{\vert \partial\Omega}=0\,. \smallskip
\end{cases}
\end{equation*}

For $z_1$, we apply \emph{Theorem 5.1.11} of \cite{lunardi} to obtain $\norm{z_1}\amu\le C\norm{\psi}_{1+\alpha}\,.$ For $z_2$, by Corollary of \emph{Theorem IV.9.1} in \cite{lsu}, we have
$$
\norm{z_2}\amu\le C\norm{f-bDz_1}_{L^q}\le C\left(\norm{f}_{L^q}+\norm{\psi}_{1+\alpha}\right)\,,
$$
concluding the first part. Then, if $b,f\in\mathcal{C}^{0,\alpha}(Q_T)$ and $\psi\in\mathcal{C}^{2+\alpha,D}$, by \eqref{estensione}, we have $f-b\cdot Dz\in\mathcal{C}^{0,\alpha}$, allowing to apply \emph{Theorem 5.1.13} in \cite{lunardi} to conclude.
\end{proof}
\end{lemma}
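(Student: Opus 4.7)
The natural plan is to decouple the effect of the terminal datum from that of the drift/source. I would write $z=z_1+z_2$, where $z_1$ solves the backward parabolic Dirichlet problem with terminal value $\psi$, zero drift and zero right-hand side, and $z_2$ carries the whole remaining term, namely
\begin{equation*}
-(z_2)_t-\mathrm{tr}(a(x)D^2 z_2)+b\cdot Dz_2=f-b\cdot Dz_1,\qquad z_2(T)=0,\qquad \bdone{z_2}.
\end{equation*}
This splitting is convenient because the optimal space regularity of $z_1$ is dictated only by $\psi$ (so no integrability hypothesis on $b,f$ is needed there), while $z_2$ has zero terminal value, which makes the corner compatibility issues at $\{T\}\times\partial\Omega$ automatic.

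For $z_1$ I would invoke the classical Schauder-type result for the Dirichlet heat-type equation (\textit{Theorem 5.1.11} of \cite{lunardi}), which, thanks to $\psi\in\mathcal{C}^{1+\alpha,D}$, yields $\norm{z_1}\amu\le C\norm{\psi}_{1+\alpha}$ and in particular $Dz_1\in L^\infty(Q_T)\subset L^q(Q_T)$. Then for $z_2$ the forcing $f-b\cdot Dz_1$ is in $L^q(Q_T)$, so the parabolic maximal regularity in $L^q$ (Corollary of \textit{Theorem IV.9.1} in \cite{lsu}) gives $z_2\in W^{2,1}_q$. The point of the exponent $q=\frac{d+2}{1-\alpha}$ is that the parabolic Sobolev embedding $W^{2,1}_q\hookrightarrow \mathcal{C}\amu$ is sharp precisely for this value, and this delivers
\begin{equation*}
\norm{z_2}\amu\le C\norm{f-b\cdot Dz_1}_{L^q}\le C\bigl(\norm{f}_{L^q}+\norm{\psi}_{1+\alpha}\bigr),
\end{equation*}
yielding \eqref{estensione} by summation.

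For the upgraded estimate \eqref{espansione} I would feed \eqref{estensione} back into the equation: if now $b,f\in\mathcal{C}^{0,\alpha}$, then $Dz\in\mathcal{C}^{0,\alpha}$ by the first part, hence $f-b\cdot Dz\in\mathcal{C}^{0,\alpha}(Q_T)$. Since $\psi\in\mathcal{C}^{2+\alpha,D}$ provides the required first-order compatibility condition at the parabolic boundary corner, the parabolic Schauder estimate with Dirichlet data (\textit{Theorem 5.1.13} in \cite{lunardi}) applies and gives the $\mathcal{C}\amv$ bound.

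I expect the main obstacle to be the first part, and specifically the interplay between the choice of $q$ and the compatibility conditions: the Schauder estimate for $z_1$ needs $\psi|_{\partial\Omega}=0$ (otherwise the $\mathcal{C}\amu$ bound up to the boundary fails), and the $L^q$ estimate for $z_2$ needs the zero terminal datum (otherwise one would need $W^{2-2/q,q}$ compatibility at $t=T$, which we do not have). The splitting is what arranges both conditions simultaneously, and the exponent $q=\frac{d+2}{1-\alpha}$ is dictated by the need to pass from $W^{2,1}_q$ to $\mathcal{C}\amu$ without losing a fractional derivative.
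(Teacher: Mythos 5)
Your proposal is correct and follows essentially the same route as the paper: the identical splitting $z=z_1+z_2$ (homogeneous problem with terminal datum $\psi$, plus a zero-terminal-datum problem carrying $f-b\cdot Dz_1$), the same invocations of \emph{Theorem 5.1.11} of \cite{lunardi}, the Corollary of \emph{Theorem IV.9.1} in \cite{lsu}, and \emph{Theorem 5.1.13} of \cite{lunardi} for the bootstrapped Schauder estimate. Your added remarks on the role of the exponent $q=\frac{d+2}{1-\alpha}$ and on how the splitting arranges the compatibility conditions are accurate and in fact make the argument more transparent than the paper's own exposition.
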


\subsection{The Fokker-Planck equation}\label{subsection}

From now on, $q$ will denote the quantity $q:=\frac{d+2}{1-\alpha}$, and $p$ will be the conjugate exponent of $q$, i.e. $p:=\frac{d+2}{d+1+\alpha}\,.$ 

The present section is focused on the study of the following Fokker-Planck equation
\begin{equation}\label{linfp}
\begin{cases}
\mu_t-\mathrm{div}(a(x)D\mu)-\mathrm{div}(\mu b)=\mathrm{div}(c)\,,\\
\mu(t_0)=\mu_0\,,\\
\mu_{\vert \partial\Omega}=0\,,
\end{cases}
\end{equation}
where $c\in L^1(Q_T)$, $\mu_0\in\mathcal{C}^{-(1+\alpha)}$, and $b\in L^q(Q_T)$. 
Let us underline that the latter equation plays a fundamental role
in studying both the Mean Field Games system and the linearized one.

The main difficulty here lies in the low regularity of both data and coefficients. Hence, the main idea is to start with the regular case, where we have the existence and uniqueness of solutions,
to then obtain estimates that we will then exploit to pass to the limit in the general case.

Let us start by giving a suitable definition of a solution for the system \eqref{linfp}:
\begin{definition}\label{canzonenuova}
Let $c\in L^1$, $\mu_0\in\mathcal{C}^{-1}$, $b\in L^q$. We say that a function $\mu\in\mathcal{C}([0,T];\mathcal{C}^{-(1+\alpha),D})\cap L^1(Q_T)$ is a weak solution of \eqref{linfp} if, for all $t\in(t_0,T]$ and $\phi$ satisfying in $[t_0,t]\times\Omega$ the following linear equation
\begin{equation}\label{hjbfp}
\begin{cases}
-\phi_t-\mathrm{div}(aD\phi)+bD\phi=\psi\,,\\
\phi(t)=\xi\,,\\
\phi_{\vert \partial\Omega}=0\,,
\end{cases}
\end{equation}
with $\psi\in L^\infty(\Omega)$ and $\xi\in\mathcal{C}^{1+\alpha,D}$, it holds true:
\begin{equation}\label{weakmu}
\langle \mu(t),\xi\rangle+\int_{t_0}^t\into\mu(s,x)\psi(s,x)\,dxds=\langle\mu_0,\phi(t_0,\cdot)\rangle-\int_{t_0}^t\into c(s,x)\cdot D\phi(s,x)\,dxds\,,
\end{equation}
where $\langle \cdot,\cdot\rangle$ denotes, respectively, the duality between $\mathcal{C}^{-(1+\alpha),D}$ and $\mathcal{C}^{1+\alpha,D}$, $\mathcal{C}^{-(1+\alpha)}$ and $\mathcal{C}^{1+\alpha}$.
\end{definition}

Note that Lemma \ref{sonobravo} implies $\phi \in \mathcal{C}^{\frac{1+\alpha}2,1+\alpha}$, therefore the terms on the right-hand side are well-defined and the definition is well-posed.

\begin{proposition}\label{peggiodellagerma}
Let $c\in L^1$, $\mu_0\in\mathcal{C}^{-1}$, $b\in L^q$. Then there exists a unique solution for \eqref{linfp}, which satisfies, for a certain $C$ depending on $\norm{a}_{L^q}$ and $\norm{b}_{L^q}$,




\begin{equation}\label{stimefokker}
\supt\norm{\mu(t)}\amc+\norm{\mu}_{L^p}\le C\left(\norm{\mu_0}_{-(1+\alpha)}+\norm{c}_{L^1}\right)\,,
\end{equation}

Moreover, if $\mu^n_0\to\mu_0$ in $\mathcal{C}^{-(1+\alpha)}$, $b^n\to b$ in $L^q$, $c^n\to c$ in $L^1$,  we have $\mu^n\to\mu$ in $\mathcal{C}([0,T];\mathcal{C}^{-(1+\alpha),D})\cap L^p(Q_T)$, where $\mu^n$ and $\mu$ are the solutions related  to $(\mu^n_0,b^n,f^n)$ and $(\mu_0,b,f)$.

\end{proposition}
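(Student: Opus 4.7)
The plan is to combine a duality-based a priori estimate with an approximation procedure, exploiting the regularity of solutions to the dual equation \eqref{hjbfp} given by Lemma \ref{sonobravo}. Throughout, I absorb the first-order term coming from the divergence form of \eqref{linfp} (produced by differentiating $a$) into the drift $b$, so that Lemma \ref{sonobravo} applies directly to the adjoint problem; this is harmless since $a\in\mathcal{C}^{1+\alpha}$ contributes a bounded perturbation.

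The first step is to derive estimate \eqref{stimefokker} from the weak formulation. Given a solution $\mu$ and $t\in(t_0,T]$, take $\xi\in\mathcal{C}^{1+\alpha,D}$ with $\|\xi\|_{1+\alpha}\le 1$, set $\psi=0$ in \eqref{hjbfp}, and apply \eqref{estensione} to obtain $\|\phi\|\amu\le C$, with $C$ depending only on $a$ and $\|b\|_{L^q}$. Inserting into \eqref{weakmu} yields $|\langle\mu(t),\xi\rangle|\le C(\|\mu_0\|\amb+\|c\|_{L^1})$, and taking the supremum over admissible $\xi$ controls $\mu(t)$ in $\mathcal{C}^{-(1+\alpha),D}$. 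For the $L^p$-bound, fix $t=T$, $\xi=0$, and test with $\psi\in L^\infty$ normalised in $L^q$; the same scheme combined with the duality $L^p=(L^q)'$ completes \eqref{stimefokker}. Uniqueness follows immediately by applying the estimate to the difference of two solutions with identical data.

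For existence, when all data are smooth and $\mu_0$ vanishes on $\partial\Omega$, classical parabolic theory (see \cite{lsu,lunardi}) yields a smooth solution of \eqref{linfp}. For arbitrary data, Lemma \ref{prop:espen} provides $\mu_0^n\in\mathcal{C}^{2+\alpha}$ with ${\mu_0^n}_{|\partial\Omega}=0$ and $\mu_0^n\to\mu_0$ in $\mathcal{C}^{-(1+\alpha)}$, while $b^n\to b$ in $L^q$ and $c^n\to c$ in $L^1$ are obtained by mollification. The associated smooth solutions $\mu^n$ are uniformly bounded by the a priori estimate, since $\|b^n\|_{L^q}$ is bounded. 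Applied to $\mu^n-\mu^m$, which solves \eqref{linfp} with drift $b^n$, initial datum $\mu_0^n-\mu_0^m$, and source $\mathrm{div}(c^n-c^m)+\mathrm{div}((b^n-b^m)\mu^m)$, the same estimate together with the H\"older bound $\|(b^n-b^m)\mu^m\|_{L^1}\le\|b^n-b^m\|_{L^q}\|\mu^m\|_{L^p}$ yields a Cauchy sequence in $\mathcal{C}([0,T];\mathcal{C}^{-(1+\alpha),D})\cap L^p(Q_T)$. Passing to the limit in every term of \eqref{weakmu}, using the linearity of the dual problem and Lemma \ref{sonobravo} to pass $\phi^n\to\phi$, produces a weak solution; the final stability claim follows by the same estimate applied to $\mu^n-\mu$.

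The main obstacle I expect is the careful treatment of the drift: because $b$ is only $L^q$, the cross term $\mathrm{div}((b^n-b)\mu)$ appearing in the equation for the difference must be handled as an $L^1$ source on the right-hand side rather than absorbed into the drift, which is precisely why both parts of \eqref{stimefokker} are needed \emph{simultaneously}. Equally delicate is tracking the dependence of the constant in Lemma \ref{sonobravo} on $\|b\|_{L^q}$ in order to ensure uniformity along the approximating sequence. Finally, the density result of Lemma \ref{prop:espen} is crucial here, since smooth functions are not generally dense in $\left(\mathcal{C}^{n+\alpha}\right)'$ and one must approximate $\mu_0\in\mathcal{C}^{-1}$ specifically within the subset $\mathcal{C}^{-(1+\alpha)}$ while preserving the boundary trace.
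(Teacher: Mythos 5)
Your proposal is correct and follows essentially the same route as the paper's proof: the same duality a priori estimate via Lemma \ref{sonobravo} (testing with $\xi\in\mathcal{C}^{1+\alpha,D}$, $\psi=0$ for the $\mathcal{C}^{-(1+\alpha),D}$ bound and with $\xi=0$, $\psi\in L^q$ for the $L^p$ bound), the same approximation by smooth data via Lemma \ref{prop:espen}, the same Cauchy-sequence argument treating the cross term $\mu^m(b^n-b^m)$ as an $L^1$ source via H\"older, and the same passage to the limit in the weak formulation through the convergence of the dual test functions. The only (correct) addition is your explicit remark on absorbing the $(\mathrm{div}\,a)\cdot D\phi$ term into the drift, which the paper leaves implicit.
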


The choice of the space $\mathcal C^{-1}$ motivated by the initial datum in Eq. \ref{eq:mfg} enables a smooth approximation scheme within the framework of dual Hölder spaces, while maintaining uniform estimates in the stronger  $\mathcal{C}^{-(1+\alpha),D}$ norm.

\begin{proof}
Without loss of generality, we can consider $t_0=0$.

\emph{Existence: smooth case.} Assume that $f$, $b$ and $\mu_0$ are smooth functions, with $\mu_0$ satisfying  $\mu_0(x)_{\vert x\in\partial\Omega}=0\,.$ Then, splitting the divergence terms in \eqref{linfp}, we obtain a linear equation and the existence of solutions is guaranteed by \cite{lsu,lunardi}.

Let $\phi$ be a solution of \eqref{hjbfp}, with $\psi=0$ and $\xi\in\mathcal{C}^{1+\alpha,D}$. Using $\phi$ as a test function for $\mu$, we get  by Lemma \ref{sonobravo}
\begin{align*}\label{rhs}
&   \displaystyle \langle \mu(t),\xi\rangle=\langle\mu_0,\phi(0,\cdot)\rangle-\int_0^t\into c(s,x)\cdot D\phi(s,x)\,dxds \smallskip \\ & \hspace{1.2cm} \displaystyle \le C\norm{\xi}_{1+\alpha}\left(\norm{\mu_0}_{-(1+\alpha)}+
\norm{c}_{L^1}\right)\,.
\end{align*}

Passing to the $sup$ for $\norm{\xi}_{1+\alpha}\le 1$, we obtain
\begin{equation}
\supo\norm{\mu(t)}\amc\le C\left(\norm{\mu_0}_{-(1+\alpha)}+\norm{c}\amf\right)\,,
\end{equation}
and we are left to prove the $L^p$ bound for $\mu$. To this end, we take $\phi$ as the solution of \eqref{hjbfp}, with $t=T$, $\xi=0$ and $\psi\in L^q\,.$ Then, again by Lemma \ref{sonobravo}, we have
\begin{align*}
&\displaystyle \intif\mu\psi\,dxds=\langle\mu_0,\phi(0,\cdot)\rangle-\int_0^T\into c(s,x)\cdot D\phi(s,x)\,dxds
\smallskip \\ & \hspace{2cm} \displaystyle \le C\norm{\psi}_{L^q}\left( \norm{\mu_0}_{-(1+\alpha)}+\norm{c}\amf \right)\,.
\end{align*}
Passing to the $\sup$ for $\norm\mu_{L^q}\le1$, we get
$$
\norm{\mu}_{L^p}\le C\left( \norm{\mu_0}_{-(1+\alpha)}+\norm{c}\amf \right)\,,
$$
which concludes the proof in the regular case.

\emph{Existence: general case.} We proceed adapting and adjusting \cite[Proposition 5.3]{Ricciardi}.
By Lemma \ref{prop:espen}, we take smooth data $\mu_{0}^k$, $c_k$, $b_k$, with $\mu_0^k(x)=0$ for $x\in\partial\Omega$, and converging to $\mu_{0}$, $c$, $b$, respectively in $\mathcal{C}^{-(1+\alpha),D}$, $L^1$ and $L^q$.


We consider $\mu^k$ as the solution of \eqref{linfp}. The previous convergences imply, for a certain $C>0$,
\begin{equation*}	\begin{array}{c}
\vert\vert {\mu_{0}^k}\vert\vert \amb\le C\norm{\mu_0}\amb\, , \smallskip \\ \norm{b_k}_{L^q}\le C\norm{b}_{L^q}\, , \smallskip \\ \vert\vert {c_k}\vert\vert_{L^1}\le C\norm{c}_{L^1}\,. \smallskip
\end{array}
\end{equation*}
Considering the function $\mu^{k,h}:=\mu^k-\mu^h$, which satisfies \eqref{linfp} with data $b=b_k$,\\$c=c_k-c_h+\mu_h(b_k-b_h)$, $\mu^0=\mu_{0}^k-\mu_{0}^h$, then, by estimates \eqref{stimefokker},
we have
\begin{align*}
&\sup_t \vert\vert {\mu^{k,h}(t)} \vert\vert \amc\,+\, \vert\vert {\mu^{k,h}} \vert\vert _{L^p}\\\le C&\left( \vert\vert {\mu_{0}^k-\mu_{0}^h} \vert\vert _{-(1+\alpha)}+ \vert\vert {c_k-c_h} \vert\vert _{L^1}+ \vert\vert {\mu_h(b_k-b_h)} \vert\vert_{L^1}\right)\,,
\end{align*}

By the uniform bound of $\{\mu_k\}_k$ in $L^p$, 
the last term can be estimated as follows 		\begin{align}\label{luigicoibluejeans}
\norm{\mu^h(b_k-b_h)}_{L^1}\le C\vert\vert {b_k-b_h}\vert\vert _{L^q}\,,
\end{align}
and since $\{\mu_{0}^k\}_k$, $\{c_k\}_k$ and $\{b_k\}_k$ are Cauchy sequences, the right-hand side goes to $0$ and $\{\mu^k\}_k$ is a Cauchy sequence, too. Then $\mu^k\to\mu$ in $\mathcal{C}([0,T];\mathcal{C}^{-(1+\alpha),N})\cap L^p(Q_T)$, for a certain $\mu$ also satisfying \eqref{stimefokker}.
We are then left to prove that $\mu$ satisfies \eqref{weakmu}, for $\phi$ satisfying \eqref{hjbfp}. To this end, we consider the test function $\phi_k$ related to $\mu^k$, so that
\begin{equation}\label{muoviti}
\langle \mu^k(t),\xi\rangle+\inti\mu^k(s,x)\psi(s,x)\,dxds=\langle\mu_{0}^k,\phi_k(0,\cdot)\rangle-\int_0^t\into c_k(s,x)\cdot D\phi_k(s,x)\,dxds\,,
\end{equation}
Concerning convergence of $\phi_k$ towards $\phi$, since $\phi_k-\phi$ satisfies
\begin{equation*}
\begin{cases}
-(\phi_k-\phi)_t-\mathrm{div}(aD(\phi_k-\phi))+b_kD(\phi_k-\phi)=(b_k-b)D\phi\,,\\
(\phi_k-\phi)(t)=0\,,\\
(\phi_k-\phi)_{\vert \partial\Omega}=0\,,
\end{cases}
\end{equation*}
by Lemma \ref{sonobravo}, we have
$$
\vert\vert {\phi_k-\phi}\vert\vert \amu\le C\vert\vert {(b_k-b)D\phi}\vert\vert _{L^q}\to0\,,
$$
allowing us to pass to the limit in \eqref{muoviti}, hence ending the existence part.

\emph{Uniqueness and stability.} Let $\mu_1$ and $\mu_2$ be two solutions. Then $\mu:=\mu_1-\mu_2$ solves the same problem with $c=\mu_0=0$, and by \eqref{stimefokker} we have $\norm{\mu}_{L^1}=\supo\norm{\mu(t)}_{-(1+\alpha),D}=0$,  concluding the uniqueness part.

For the stability part, we consider $c_n\to c$, $\mu_{0}^n\to\mu_0$ and $b_n\to b$. Then the function $\mu^n-\mu$ satisfies \eqref{linfp}, with $b\,,\mu_0$ and $f$ respectively replaced by $b^n$, $\mu^n_0-\mu_0$, and $c^n-c+\mu(b^n-b)$. Hence, \eqref{stimefokker} implies
$$
\begin{array}{c}		
\supo\norm{\mu^n-\mu}\amc\,+\norm{\mu^n-\mu}_{L^p}\\ \le C\left(\vert\vert {\mu^n_0-\mu_0} \vert\vert _{-(1+\alpha)}+ \vert\vert {c^n-c} \vert\vert _{L^1}+ \vert\vert {\mu(b^n-b)} \vert\vert_{L^1}\right)\,.
\end{array}
$$
Proceeding as done for the existence in the general case, we derive the right-hand side convergence to $0$, which implies $\mu^n\to\mu$ in $\mathcal{C}([0,T];\mathcal{C}^{-(1+\alpha),D})\cap L^p(Q_T)$, hence concluding the proof.
\end{proof}	

As a direct consequence of Prop. \eqref{peggiodellagerma},
we can derive a further estimate in the space $\mathcal C^{-(2+\alpha),D}$.

\begin{corollary}
\label{corollary}
Suppose hypotheses of Proposition \ref{peggiodellagerma} are satisfied together with $b\in\mathcal{C}^{\frac\alpha 2,\alpha}$. Then the function $\mu$ satisfies
\begin{align}
\supo\norm{\mu(t)}_{-(2+\alpha),D}+\norm{\mu}\aam\le C\left(\norm{\mu_0}_{-(2+\alpha)}+\norm{c}\amf\right)\,,\label{forsemisalvo}
\end{align}
\end{corollary}
\begin{proof}
Since $b\in\mathcal{C}^{\frac\alpha 2,\alpha}$, we can take $\phi$ as the solution of \eqref{hjbfp}, with $\xi\in C^{2+\alpha,D}(\Omega)$, $\psi=0$, so that, applying the results of \cite{lsu}, \cite{lunardi}, we have $\norm{\phi}\amd\le C\norm{\xi}_{2+\alpha},$ and \eqref{weakmu} implies
\begin{align*}
&    \displaystyle   \langle\mu(t),\xi\rangle=\langle\mu_0,\phi(0,\cdot)\rangle-\int_0^T\into c(s,x)\cdot D\phi(s,x)\,dxds \smallskip \\
\displaystyle & \hspace{1.3cm} \le C\left(\norm{\mu_0}_{-(2+\alpha)}+\norm{c}\amf\right)\norm{\xi}_{2+\alpha}\,.
\end{align*}

Now, as in Proposition \ref{peggiodellagerma}, we choose $\phi$ as the solution of \eqref{hjbfp}, with $t=T$, $\xi=0$ and $\psi\in\mathcal C^{\frac\alpha2,\alpha}\,.$ Then, again by Lemma \ref{sonobravo}, we have
\begin{align*}
&\displaystyle \intif\mu\psi\,dxds=\langle\mu_0,\phi(0,\cdot)\rangle-\int_0^T\into c(s,x)\cdot D\phi(s,x)\,dxds
\smallskip \\ & \hspace{2cm} \displaystyle \le C\norm{\psi}\am\left( \norm{\mu_0}_{-(2+\alpha)}+\norm{c}\amf \right)\,.
\end{align*}
Passing to the $\sup$ in the two inequalities for $\norm\psi\am\le1$ and $\norm{\xi}_{2+\alpha}\le 1$, we get \eqref{forsemisalvo}.
\end{proof}

\begin{remark}\label{rem:espen}
Observe that the analogue of this Corollary in the Neumann case is given by \cite[Corollary 5.4]{Ricciardi}. However, in order to adjust the issues raised in \cite{jakob}, we have refined the estimate \eqref{forsemisalvo}, by proving a bound for $\mu\in\mathcal C^{-\left(\frac\alpha2,\alpha\right)}$. This estimate is crucial to prove stronger regularity results for the intrinsic derivative of $U$ in the measure variable, $D_mU$. We stress the fact that similar adjustments can also be done in the Neumann case \cite{Ricciardi}.
\end{remark}

\subsection{Mean Field Games system and Lipschitz regularity of U}
Now, we turn to the study of the MGF system \eqref{eq:mfg} and its properties to derive essential results that we will then exploit to obtain some first regularity estimates of the solution  $U$ of the Master Equation.

In particular, the MFG system has the following form:
\begin{equation}\label{meanfieldgames}
\begin{cases}
-\partial_t u - \mathrm{tr}(a(x)D^2u) +H(t,x,Du)=F(t,x,m(t))\,,\\
\partial_t m - \sum\limits_{i,j} \partial_{ij}^2 (a_{ij}(x)m) -\mathrm{div}(mH_p(t,x,Du))=0\,,\\
u(T)=G(x,m(T))\,,\qquad m(0)=m_0\,,\\
\diric{u}\,,\qquad\diric{m}\,.
\end{cases}
\end{equation}

The first result is obtained by studying some regularity properties of \eqref{meanfieldgames} uniformly in $m_0$.

\begin{proposition}
\label{estimates}
The system \eqref{meanfieldgames} has a unique classical solution $(u,m)$, with $u\in \mathcal{C}^{1+\frac{\alpha}{2},2+\alpha}$ and $m\in\mathcal{C}([t_0,T];\mathcal{P}^{sub}(\Omega))$. Moreover, $m(t)$ has a positive density for $t>t_0$, and the following estimates hold:
\begin{equation}\label{first}
\norm{m}\amm{\frac{\alpha}{2}}+\norm{m}_{L^p}+\norm{u}_{1+\frac{\alpha}{2},2+\alpha}\le C\,.
\end{equation}
Furthermore, if $(u_1,m_1)$ and $(u_2,m_2)$ are two solutions of \eqref{meanfieldgames}, with $m_1(t_0)=m_{01}$, $m_2(t_0)=m_{02}$, then it holds 
\begin{equation}\begin{split}\label{lipsch}
\norm{u_1-u_2}\amv+\norm{m_1-m_2}_{L^p(Q_T)}&\le C\dw(m_{01},m_{02})\,,\\
\supo\norm{m_1(t)-m_2(t)}\amc& \le C\dw(m_{01},m_{02})\,.
\end{split}\end{equation}
\begin{proof}
\emph{Step 1: Existence and uniqueness of solutions.} We want to apply Schauder's fixed-point Theorem. Let $X$ be the convex set, closed for the uniform distance, defined as follows:
\begin{equation}
X:=
\left\{\gamma\in\mathcal{C}([t_0,T];\mathcal P^{sub}(\Omega))\mbox{ s.t. }\norm{\gamma}\amm{\frac\alpha 2}\le M\,\,\, \forall s,t\in[t_0,T]
\right\}\,,
\end{equation}
where $M$ will be specified later. We define a map $\Phi:X\to X$ as follows.\\
For $\gamma\in X$, let $u=u_\gamma$ be the solution of the HJB equation:		\begin{equation}\label{hj}
\begin{cases}
-u_t-\mathrm{tr}(a(x)D^2u)+H(t,x,Du)=F(t,x,\gamma(t))\,, \smallskip \\
u(T)=G(x,\gamma(T))\,, \smallskip \\
u_{\vert \partial\Omega}=0\,.\smallskip
\end{cases}
\end{equation}
Using hypotheses on $F$ and $G$ and \textit{Theorem V.6.1} of \cite{lsu}, we know that there exists a unique classical solution $u\in\mathcal C^{1+\frac\alpha 2,\alpha}$. Moreover, by Taylor's formula, we can write $H(t,x,Du)=H(t,x,0)+V(t,x)\cdot Du$ for a certain $V\in L^\infty$, thanks to the global boundedness of $Du$,  obtaining a linear equation satisfied by $u$.
Then, exploiting both the Corollary of \emph{Theorem IV.9.1} and \emph{Theorem IV.5.2} of \cite{lsu} we get
\begin{align*}
\norm{u}_{1+\frac{\alpha}{2},2+\alpha}\le C\left(\norm{F(\cdot,\cdot,\gamma(\cdot))}_{\frac{\alpha}{2},\alpha}+\norm{G(\cdot,\gamma(T))}_{2+\alpha}\right)\,,
\end{align*}
Define $\Phi(\gamma)=m$, where $m\in\mathcal{C}([t_0,T];\mathcal{P}^{sub}(\Omega))$ is the solution of the FP equation
\begin{equation}\label{fpk}
\begin{cases}
m_t-\sum\limits_{i,j} \partial_{ij}^2 (a_{ij}(x)m) -\mathrm{div}(mH_p(t,x,Du))=0\,, \smallskip \\
m(t_0)=m_0\,, \smallskip \\
m_{\vert \partial\Omega}=0\,. \smallskip
\end{cases}
\end{equation}
The existence of a unique solution for \eqref{fpk} is guaranteed by Proposition \ref{peggiodellagerma}. Moreover, we know from \eqref{stimefokker} that
$$
\supt\norm{m(t)}\amc+\norm{m}_{L^p}\le C\norm{m_0}_{-(1+\alpha)}\,,
$$
for a certain $p>1$.
Therefore, to check that $m\in X$, we are left with proving that, for some $C>0$, it holds
$$
\norm{m(t)-m(s)}\amc\le C\vert t-s\vert ^{\frac\alpha 2}\,,\qquad\mbox{for all }t\neq s\,.
$$
Subtracting the distributional formulation \eqref{weakmu} in $t$ and $s$, we have
\begin{equation}\begin{split}\label{sotis}
\into \xi(x)m(t,dx)-\into\phi(s,x)m(s,dx)+\int_s^t\into\psi(r,x)m(r,dx)dr=0\,,
\end{split}\end{equation}
for each $\xi\in\mathcal{C}^{1+\alpha,D}$, $\psi\in L^\infty$ and $\phi$ satisfying
\begin{equation}\label{coglia}
\begin{cases}
-\phi_t-\mathrm{tr}(a(x)D^2\phi)+H_p(t,x,Du)\cdot D\phi=\psi\,, \smallskip \\
\phi(t)=\xi\,, \smallskip \\
\phi_{\vert \partial\Omega}=0\,. \smallskip
\end{cases}
\end{equation}
We choose $\psi=0$. Thanks to Lemma \ref{sonobravo}, we know that $\phi\in\mathcal{C}^{1+\frac\alpha 2,1+\alpha}$ and its norm is bounded according to \eqref{estensione}. Coming back to \eqref{sotis}, we obtain

\begin{equation}
\begin{array}{c}
\displaystyle \into\xi(x)(m  (t,dx)-m(s,dx)) =\into(\phi(s,x)-\phi(t,x))m(s,dx) \medskip \\ 
\displaystyle \qquad \qquad \le C\norm{\phi(t)-\phi(s)}_{1+\alpha}
\norm{m(s)}\amc \le C\vert t-s\vert ^{\frac\alpha 2}\norm{m_0}_{-(1+\alpha)}\norm{\xi}_{1+\alpha}\,, \smallskip
\end{array}
\end{equation}
and taking the $\sup$ over the

$\xi\in\mathcal{C}^{1+\alpha,D}$ with $\norm{\xi}_{1+\alpha}\le 1$,
\begin{equation}
\norm{m(t)-m(s)}\amc\le C\vert t-s\vert ^{\frac\alpha 2}\norm{m_0}_{-(1+\alpha)}\,.
\end{equation}


Choosing $M=C\norm{m_0}_{-(1+\alpha)}$, we have proved that $m\in X$.\\


Since $X$ is convex and closed and $\Phi(X)\subseteq X$, to apply Schauder's theorem, we need to show that:
\begin{itemize}
\item $\Phi(X)$ is relatively compact;
\item $\Phi$ is continuous.
\end{itemize}

In this way, the closure of the convex envelope of $\Phi(X)$, say $\hat{X}:=\overline{\mathrm{inv}(\Phi(X))}$, is compact and convex, and for the closure and the convexity of $X$ we have $\Phi(X)\subseteq \hat{X}\subseteq X$. So we can consider the restriction $\Phi_\vert  : \hat{X}\to \hat{X}\,,$ which satisfies the classical hypotheses of the Schauder's Theorem. Hence, the existence of a fixed point remains guaranteed.

We start proving the relatively compactness of $\Phi(X)$. Let ${\{\gamma_n\}}_n\subset X$ and let $u_n$ and $m_n$ be the related solutions. Applying Ascoli-Arzel\`a's Theorem we have  $u_{n_k}\to u$ in $\mathcal{C}^{1,2}$,
for a certain subsequence $\{u_{n_k}\}_k$ and $u\in X$.\\
To prove the convergence of $\{m_{n_k}\}_n$, we take $\phi_{n_k}$ as the solution of \eqref{coglia} with $Du$ replaced by $Du_{n_k}$ and $\psi=0$. The difference $\phi_{k,h}:=\phi_{n_k}-\phi_{n_h}$ satisfies
\begin{equation}
\begin{cases}
\displaystyle -(\phi_{k,h})_t-\mathrm{tr}(a(x)D^2\phi_{k,h})+H_p(t,x,Du_{n_k})\!\cdot\! D\phi_{k,h} \smallskip \\ \hspace{3cm} = (H_p(t,x,Du_{n_h})-H_p(t,x,Du_{n_k}))\!\cdot\! D\phi_{n_h}\,, \smallskip \\
\phi_{k,h}(t)=0\,, \smallskip \\
\bdone{\phi_{k,h}}\,, \smallskip
\end{cases}
\end{equation}
then Lemma \ref{sonobravo} implies
\begin{equation}
\begin{array}{c}
\displaystyle \norm{\phi_{k,h}}\amu \le C\norminf{(H_p(t,x,Du_{n_h})-H_p(t,x,Du_{n_k}))\cdot D\phi_{n_h}}\\ \displaystyle \le C\norminf{Du_{n_h}-Du_{n_k}}\le \omega(k,h)\,,
\end{array}
\end{equation}
where $\omega(k,h)\to 0$ when $k,h\to\infty$.

Using \eqref{sotis} with $(m_{n_k},\phi_{n_k})$ and $(m_{n_h},\phi_{n_h})$, for $k,h\in\mathbb{N}$, $s=t_0$, subtracting the two equalities, we get
\begin{align*}
\supt\norm{m_{n_k}(t)-m_{n_h}(t)}\amc\le\omega(k,h)\,\implies \exists\, m\in X \mbox{ s.t. }m_{n_k}\to m \mbox{ in }X\,,
\end{align*}
hence concluding the compactness part. The continuity is an easy consequence of the previous arguments.

In particular, we apply Schauder's theorem and obtain a classical solution of the problem \eqref{meanfieldgames}. The estimate \eqref{first} follows from the above estimates for \eqref{hj} and \eqref{fpk}.

We skip the uniqueness part, which is a standard argument, see, e.g., \emph{Proposition 3.3} of \cite{Ricciardi}.

\emph{Step 2.} Let $(u_1,m_1)$ and $(u_2,m_2)$ be two classical solutions of \eqref{meanfieldgames}, with $m_1(t_0)=m_{01}$, $m_2(t_0)=m_{02}$.  We take $\phi$ as the solution of \eqref{coglia} related to $u_1$, with $\psi=0$, and we note that $\phi$ is also a good test function for the equation of $m_2$, since it satisfies, for  $\psi=\left(H_p(t,x,Du_2)-H_p(t,x,Du_1)\right)\cdot D\phi\in L^\infty$,
\begin{equation}
\begin{cases}
-\phi_t-\mathrm{tr}(a(x)D^2\phi)+H_p(t,x,Du_2)\cdot D\phi=\psi\,, \smallskip \\
\phi(t)=\xi\,, \smallskip \\
\phi_{\vert \partial\Omega}=0\, . \smallskip
\end{cases}
\end{equation}
Then, subtracting the weak formulations of $m_1$ and $m_2$ related to $\psi$, we find
\begin{align}
\label{immigrato}
&	\displaystyle \into\xi(x)(m_1(t)-m_2(t))\,dx =  \smallskip \\ & \displaystyle \hspace{.2cm} \int_{t_0}^t\into(H_p(t,x,Du_1)-H_p(t,x,Du_2))D\phi\, m_2(s,x)dx\,ds +\langle\phi(0,\cdot),m_{01}-m_{02}\rangle\,.
\end{align}
By Lipschitz continuity of both $\phi$ and $H_p$, with respect to $p$, we get
\begin{align}
\into \xi(x)(m_1(t)-m_2(t))\,dx\le C\int_{t_0}^t\into \vert Du_1-Du_2\vert  m_2(s,x)dx\,ds+C\dw(m_{01},m_{02})\,,
\end{align}
We want to estimate the first term in the right-hand side with Young's inequality. To this end, we consider the quantities
$$
\tilde{c}=\int_{t_0}^t\into m_2(s,x)\,dxds\,,\qquad \tilde{m}_2(s,x)=\frac{m_2(s,x)}{\tilde{c}}\,.
$$
Then $\tilde{m}_2$ is a probability measure in $[0,t]\times\Omega$, and Young's inequality implies that
\begin{align*}
&\quad\int_{t_0}^t\into \vert Du_1-Du_2\vert  m_2(s,x)dx\,ds=\tilde{c}\int_{t_0}^t\into \vert Du_1-Du_2\vert  \tilde{m}_2(s,x)dx\,ds \smallskip \\ & \displaystyle \hspace{0.1cm}
\le\tilde{c} \left(\int_{t_0}^t\into \vert Du_1-Du_2\vert ^2 \tilde{m}_2(s,x)dx\right)^\miezz\le C\left(\int_{t_0}^t\into \vert Du_1-Du_2\vert ^2 {m}_2(s,x)dx\right)^\miezz\!\!\!,
\end{align*}
since $\tilde{c}$ is bounded thanks to the $L^p$ bound of $m_2$.

Using the Lasry-Lions monotonicity argument (see \emph{Lemma 3.1.2} of \cite{card}), we have
\begin{align*}
\displaystyle &\int_{t_0}^T\into\vert Du_1-Du_2\vert ^2(m_1(t,dx)+m_2(t,dx))dt\le \smallskip \\ & \displaystyle \hspace{2cm} \le C \into (u_1(0,x)-u_2(0,x))(m_{01}(dx)-m_{02}(dx))\smallskip \\ & \displaystyle \hspace{2cm}\le C\norm{u_1-u_2}\amu\dw(m_{01},m_{02})\,.
\end{align*}

Hence, we obtain
\begin{align*}
& \into \xi(x) (m_1(t)-m_2(t))\,dx \smallskip \\ & \displaystyle \hspace{2cm} \le C\left(\norm{u_1-u_2}\amu^\miezz\dw(m_{01},m_{02})^\miezz+\dw(m_{01},m_{02})\right)\,, \smallskip
\end{align*}
and finally, taking the sup over the $\xi$ with $\norm{\xi}\amc\le 1$ and over $t\in[0,T]$,

\begin{align}
\label{secondstep}
& \sup\limits_{t\in[0,T]}\norm{m_1(t)-m_2(t)}\amc \smallskip \\ & \displaystyle \hspace{2cm} \le C\left(\norm{u_1-u_2}\amu^\miezz\dw(m_{01},m_{02})^\miezz+\dw(m_{01},m_{02})\right)\,. \smallskip
\end{align}

Let us now call $u:=u_1-u_2$, then $u$ solves:
\begin{equation*}
\begin{cases}
\displaystyle -u_t-\mathrm{tr}(a(x)D^2 u)+V(t,x)Du=f(t,x)\,,\medskip \\ \displaystyle
u(T)=g(x)\,,\qquad
\bdone{u}\,, \medskip
\end{cases}
\end{equation*}
where
\begin{align*}
&V(t,x)=\int_0^1 H_p(t,x,\lambda Du_1(t,x)+(1-\lambda)Du_2(t,x)d\lambda\,;\\
&f(t,x)=\int_0^1\into\dm{F}(t,x,m_\lambda(t),y)(m_1(t,dy)-m_2(t,dy))d\lambda\,;\\
&g(x)=\int_0^1\into\dm{G}(x,m_\lambda(T),y)(m_1(T,dy)-m_2(T,dy))d\lambda\,,
\end{align*}
where $m_\lambda$ is defined  as follows
$$
m_\lambda(\cdot):=\lambda m_1(\cdot)+(1-\lambda)m_2(\cdot)\,.
$$
To apply Lemma \ref{sonobravo}, we estimate the regularity of the data:
\begin{align*}
\displaystyle & \sup\limits_{t\in[0,T]}\norm{f(t,\cdot)}_{\alpha} \smallskip \\
\displaystyle	& \qquad \le\sup\limits_{t\in[0,T]}\int_0^1\norm{\dm{F}(t,\cdot,m_\lambda(t),\cdot)}_{\alpha,(1+\alpha,D)}d\lambda\,\norm{m_1(t)-m_2(t)}\amc \smallskip\\
\displaystyle	& \qquad \le C\supo\norm{m_1(t)-m_2(t)}\amc\,, \smallskip
\end{align*}
analogously
\begin{align}
\norm{g(\cdot)}_{2+\alpha}\le C\supo\norm{m_1(t)-m_2(t)}\amc\,.
\end{align}
So, Eq. \eqref{espansione} implies
\begin{equation}\label{finalcountdown}
\begin{split}
\norm{u_1-u_2}\amv&\le C\supo\norm{m_1(t)-m_2(t)}\amc\,.
\end{split}
\end{equation}

Coming back to \eqref{secondstep}, we have
\begin{align*}
&\supo\norm{m_1(t)-m_2(t)}\amc\\& \hspace{.4cm}\le C\left(\left(\supo\norm{m_1(t)-m_2(t)}\amc\right)^{\miezz}\dw(m_{01},m_{02})^\miezz+\dw(m_{01},m_{02})\right)\,,
\end{align*}
hence, by a generalized Young's inequality:
\begin{align}\label{oterz}
\supo\norm{m_1(t)-m_2(t)}\amc\le C\dw(m_{01},m_{02})\,.
\end{align}
Plugging this estimate in \eqref{finalcountdown}, we finally obtain
\begin{align*}
&\norm{u_1-u_2}\amv\le C\dw(m_{01},m_{02})\label{osicond}\,.
\end{align*}
For the $L^p$ inequality, we consider $m:=m_1-m_2$. Then $m$ solves the equation
\begin{equation*}
\begin{cases}
\displaystyle		m_t-\sum\limits_{i,j}\partial^2_{ij}(a_{ij}(x)m)-\mathrm{div}(m(H_p(t,x,Du_1))=\smallskip \\
\displaystyle \hspace{1.5cm} \mathrm{div}(m_2(H_p(t,x,Du_2)-H_p(t,x,Du_1)))\,, \medskip \\

\displaystyle		m(t_0)=m_{01}-m_{02}\,, \medskip \\

\displaystyle		\left[m_1 - m_2 \right]_{\vert \partial\Omega}=0\,, \medskip
\end{cases}
\end{equation*}
i.e. $m$ is a solution of \eqref{linfp} with $f=\mathrm{div}(m_2(H_p(t,x,Du_2)-H_p(t,x,Du_1)))$, $\mu_0=m_{01}-m_{02}$, $b=H_p(t,x,Du_1)$. Then estimates \eqref{stimefokker} imply
$$
\norm{m_1-m_2}_{L^p(Q_T)}\le C\left(\norm{\mu_0}\amb+\norm{f}\amf\right)\,.
$$
We estimate the right-hand side term. As regards $\mu_0$ we have
$$
\norm{\mu_0}_{-(1+\alpha)}=\sup\limits_{\norm{\phi}_{1+\alpha}\le 1}\into \phi(x)(m_{01}-m_{02})(dx)\le C\dw(m_{01},m_{02})\,.
$$
For the $f$ term, we argue in the following way:
\begin{align*}
\norm{f}\amf&=\int_0^T\sup\limits_{\norm{\phi}_{W^{1,\infty}}\le 1}\left(\into H_p(x,Du_2)-H_p(x,Du_1)D\phi\,m_2(t,dx)\right)\,dt\\&\le C\norm{u_1-u_2}\amu\le C\dw(m_{01},m_{02})\,,
\end{align*}
which allows us to conclude.
\end{proof}
\end{proposition}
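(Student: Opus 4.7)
The plan is to organise the proof in two largely independent blocks: well-posedness via Schauder's fixed point theorem, and the Lipschitz estimates through duality combined with Lasry--Lions monotonicity. For the fixed point step I would work on the convex set
\[
X := \left\{ \gamma \in \mathcal{C}([t_0,T]; \mathcal{P}^{sub}(\Omega)) : \norm{\gamma}\amm{\frac{\alpha}{2}} \le M \right\},
\]
with $M$ to be fixed, and define $\Phi : X \to X$ by first solving the backward HJB equation with source $F(t,x,\gamma(t))$ and terminal $G(x,\gamma(T))$, and then solving the forward Fokker--Planck equation with drift $-H_p(t,x,Du_\gamma)$. Classical H\"older estimates for linear parabolic equations (after a Taylor expansion $H(t,x,Du) = H(t,x,0) + V \cdot Du$ with $V \in L^\infty$) give a uniform bound on $\norm{u_\gamma}_{1+\frac\alpha 2, 2+\alpha}$ in terms of the data regularity in Hypotheses~\ref{ipotesi}, and Proposition~\ref{peggiodellagerma} supplies the well-posedness and the uniform bound on $m_\gamma$ in $\mathcal{C}([t_0,T]; \mathcal{C}^{-(1+\alpha),D}) \cap L^p(Q_T)$.

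The delicate step for $\Phi(X) \subseteq X$ is equicontinuity of $m_\gamma$ in time in $\mathcal{C}^{-(1+\alpha),D}$: fixing $\xi \in \mathcal{C}^{1+\alpha,D}$ with unit norm and taking $\phi$ to solve the adjoint linear equation on $[s,t]$ with $\phi(t) = \xi$ and right-hand side zero, the weak formulation \eqref{weakmu} gives $\langle m(t) - m(s), \xi \rangle = \langle m(s), \phi(t) - \phi(s) \rangle$, and Lemma~\ref{sonobravo} provides a $\mathcal{C}^{1+\alpha}$ bound on $\phi$ that yields the desired $|t-s|^{\alpha/2}$ rate once $M$ is chosen accordingly. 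Continuity and compactness of $\Phi$ then come from Ascoli--Arzel\`a applied to $u_\gamma$ and a further dual argument for $m_\gamma$. Uniqueness is the standard Lasry--Lions monotonicity identity applied to the difference of two solutions, and positivity of $m(t)$ for $t > t_0$ follows from the strong maximum principle once the fixed point has been obtained.

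For the Lipschitz part, given two solutions $(u_i, m_i)$, the difference $u = u_1 - u_2$ solves a linear backward equation with drift $V = \int_0^1 H_p(\cdot, \lambda Du_1 + (1-\lambda) Du_2)\,d\lambda$ and data
\[
f = \int_0^1 \into \dm{F}(t,x,m_\lambda,y)(m_1 - m_2)(t,dy)\,d\lambda, \qquad g = \int_0^1 \into \dm{G}(x,m_\lambda(T),y)(m_1(T)-m_2(T))(dy)\,d\lambda,
\]
so that the regularity of $\dm{F},\dm{G}$ in Hypotheses~\ref{ipotesi} combined with Lemma~\ref{sonobravo} give $\norm{u_1 - u_2}\amv \le C \sup_t \norm{m_1(t) - m_2(t)}\amc$. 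Conversely, subtracting the weak formulations of the FP equations and testing against an adjoint solution leads to
\[
\sup_t \norm{m_1(t) - m_2(t)}\amc \le C \int_{t_0}^T \!\!\into |Du_1 - Du_2|\, m_2\,dx\,ds + C\,\dw(m_{01}, m_{02}),
\]
and the first term is controlled by Cauchy--Schwarz and the Lasry--Lions displacement estimate $\int_{t_0}^T\into|Du_1 - Du_2|^2 (m_1 + m_2)\le C \norm{u_1 - u_2}\amu \dw(m_{01},m_{02})$. Combining these two inequalities through a Young absorption closes the feedback loop and delivers both $\norm{u_1 - u_2}\amv \le C\,\dw(m_{01},m_{02})$ and the dual bound for $m_1 - m_2$. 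The $L^p$ estimate for $m_1 - m_2$ then follows from Proposition~\ref{peggiodellagerma} applied to its equation, identifying the initial datum $m_{01} - m_{02}$ and the source as a divergence term whose $L^1$ norm is controlled by $\norm{u_1-u_2}\amu$.

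The main obstacle I expect is precisely the closure of this coupled loop: each Lipschitz bound depends on the other, and progress is possible only because Lasry--Lions monotonicity produces the factor with a $1/2$ exponent, enabling absorption through Young's inequality. A secondary difficulty is that the whole argument must be compatible with the subprobability--Dirichlet setting, which forces us to invoke Lemma~\ref{sonobravo} only on test functions with vanishing boundary trace and to keep track of the correct dual spaces $\mathcal{C}^{-(1+\alpha),D}$ throughout.
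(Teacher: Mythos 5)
Your proposal follows essentially the same route as the paper's proof: Schauder's fixed point theorem on the same convex set $X$, the adjoint-equation duality argument for the time-equicontinuity of $m$ in $\mathcal{C}^{-(1+\alpha),D}$, and the coupled Lipschitz estimates closed by combining the linearized equation for $u_1-u_2$ with the Lasry--Lions monotonicity bound and a generalized Young absorption, finishing with Proposition~\ref{peggiodellagerma} for the $L^p$ estimate. The structure, the key lemmas invoked, and even the definitions of $V$, $f$, $g$ coincide with the paper's argument, so the plan is sound and matches the intended proof.
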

The previous proposition \eqref{estimates} allows us to state that
\begin{equation}\label{firstmaster}
\begin{split}
&\sup\limits_{t\in[0,T]}\sup\limits_{m\in\mathcal{P}^{sub}(\Omega)}\norm{U(t,\cdot,m)}_{2+\alpha}\le C\,,\\
&\sup\limits_{t\in[0,T]}\sup_{m_1\neq m_2}\left[\left(\dw(m_1,m_2)\right)^{-1}\norm{U(t,\cdot,m_1)-U(t,\cdot,m_2)}_{2+\alpha}\right]\le C\,,
\end{split}
\end{equation}
which are two initial regularity results for the function $U$.

\section{Linearized system and differentiability of {\it U} with respect to the measure}
\label{sec5}
This section is devoted to the study of the following \emph{linearized MFG system}:
\begin{equation}\label{linear}
\begin{cases}
\displaystyle -z_t-\mathrm{tr}(a(x)D^2z)+H_p(t,x,Du)Dz= {\dm{F}}(t,x,m(t))(\rho(t))+h(t,x)\,, \medskip \\
\displaystyle	\rho_t-\sum\limits_{ij}\partial^2_{ij}(a_{ij}(x)\rho)  -\mathrm{div}(\rho(H_p(t,x,Du))) + \\
\displaystyle \hspace{5cm} -\mathrm{div}(m H_{pp}(t,x,Du) Dz+c)=0 , \medskip \\
\displaystyle z(T,x)= {\dm{G}}(x,m(T))(\rho(T))+z_T(x)\,,\qquad\rho(t_0)=\rho_0\,, \medskip\\
\displaystyle \bdone{z}\,,\qquad\bdone{\rho}\, \medskip ,
\end{cases}
\end{equation}
where $z_T\in\mathcal{C}^{2+\alpha,D},\quad\rho_0\in\mathcal{C}^{-(1+\alpha)}$, $h\in \mathcal{C}^{0,\alpha}([t_0,T]\times\Omega)$, $c\in L^1([t_0,T]\times\Omega)$,
and where we define for $F$ (and for $G$)
$$
{\dm{F}}(t,x,m(t))(\rho(t)):=\left\langle{\dm{F}}(t,x,m(t),\cdot),\rho(t)\right\rangle\,,
$$
where the duality is between $\mathcal C^{-(1+\alpha),D}$ and $\mathcal C^{(1+\alpha),D}$.

The study of this system plays a crucial role in proving the $\mathcal{C}^1$ character of $U$ in terms of $m$. In particular, if we define the couple $(v,\mu)$ as the solution of \eqref{linear} with $h=c=z_T=0$ and $\mu(t_0)=\mu_0$, we obtain
\begin{equation}\label{reprform}
v(t_0,x)=\left\langle\frac{\delta U}{\delta m}(t_0,x,m_0,\cdot),\mu_0\right\rangle\,. 
\end{equation}
Let us start by giving a suitable definition of a solution for the previous system.
\begin{definition}\label{definition}
A couple $(z,\rho)$ is a solution of \eqref{linear} if $z\in\mathcal{C}^{1,2+\alpha}$, is a classical solution of the linear HJB equation and $\rho\in \mathcal{C}([0,T];\mathcal{C}^{-(1+\alpha),D}(\Omega))\cap L^1(Q_T)$ solves the Fokker-Planck equation, accordingly with Definition \ref{canzonenuova}.
\end{definition}

Let us underline that $c\in L^1([t_0,T]\times\Omega)\implies \mathrm{div}(c)\in L^1([t_0,T];W^{-1,\infty}(\Omega))$. Therefore, the well-posedness of the  Fokker-Planck equation is included in Proposition \ref{peggiodellagerma}.

The following Proposition states existence, uniqueness and regularity for the problem defined by Eq. \eqref{linear}. The proof relies on results previously obtained for the Fokker-Planck equation, 
and it proceeds similarly to the Neumann case \cite{Ricciardi}. We report it here for the sake of completeness.

\begin{proposition}\label{linearD}
Let hypotheses \ref{ipotesi} hold for a certain $0<\alpha<1$, and let $\rho_0\in\mathcal C^{-1}$. Then there exists a unique solution $(z,\rho)\in\mathcal{C}^{1,2+\alpha}\times\,\left(\mathcal{C}([0,T];\mathcal{C}^{-(1+\alpha),D}(\Omega))\cap L^1(Q_T)\right)$ for the system \eqref{linear}, satisfying  
\begin{equation}
\begin{split}\label{stimelin}
\norm{z}\amv+\sup\limits_t\norm{\rho(t)}_{-(1+\alpha),D}+\norm{\rho}_{L^p}\le CM\spazio,
\end{split}
\end{equation}
where $C$ depends on $H$, $p$ is defined as in section \ref{subsection}, and $M$ is given by
\begin{equation}\label{emme}
M:=\norm{z_T}_{2+\alpha}+\norm{\rho_0}_{-(1+\alpha)}+\norm{h}_{0,\alpha}+\norm{c}_{L^1}\,.
\end{equation}
Moreover, the solution $(v,\mu)$ related to $h=c=z_T=0$ satisfies
\begin{equation}\label{sbrigati}
\norm{v}_{1,2+\alpha}+\supo\norm{\mu(t)}_{-(2+\alpha),D}+\norm{\mu}\aam\le C\norm{\mu_0}_{-(2+\alpha)}\,.
\end{equation}
\end{proposition}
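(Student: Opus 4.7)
The plan is to decouple system \eqref{linear} and build a solution by a fixed-point argument on the forward variable, mirroring the Neumann blueprint in Propositions 5.8 and 5.11 of \cite{Ricciardi}. Given $\rho\in\mathcal{C}([t_0,T];\mathcal{C}^{-(1+\alpha),D})\cap L^p(Q_T)$, define $\Phi(\rho)=\tilde\rho$ by first solving the linear backward HJB with source $\dm{F}(t,x,m(t))(\rho(t))+h$ and terminal datum $\dm{G}(x,m(T))(\rho(T))+z_T$ to obtain $z=z[\rho]\in\mathcal{C}^{1,2+\alpha}$, and then using the divergence source $\mathrm{div}(mH_{pp}(t,x,Du)Dz+c)$ to solve the forward Fokker-Planck equation for $\tilde\rho$ in the sense of Definition \ref{canzonenuova}. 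A fixed point of $\Phi$ solves \eqref{linear}.

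The a priori bounds follow by stitching Lemma \ref{sonobravo} and Proposition \ref{peggiodellagerma}. Hypothesis \ref{ipotesi}\textit{(iv)}--\textit{(vi)} guarantees $\dm{F}(t,x,m,\cdot)\in\mathcal{C}^{2+\alpha,D}_y$ and $\dm{G}(x,m,\cdot)\in\mathcal{C}^{2+\alpha,D}_y$ uniformly in the other variables, so the duality pairings with $\rho(t)\in\mathcal{C}^{-(1+\alpha),D}$ yield a source in $\mathcal{C}^{0,\alpha}$ and a terminal datum in $\mathcal{C}^{2+\alpha,D}$, both controlled by $C\sup_t\|\rho(t)\|_{-(1+\alpha),D}$. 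Since $H_p(t,x,Du)\in\mathcal{C}^{\alpha/2,\alpha}$ by Proposition \ref{estimates}, estimate \eqref{espansione} of Lemma \ref{sonobravo} gives $\|z\|_{1,2+\alpha}\le C(\sup_t\|\rho(t)\|_{-(1+\alpha),D}+\|h\|_{0,\alpha}+\|z_T\|_{2+\alpha})$. Plugging $Dz\in L^\infty$ back in, we have $mH_{pp}Dz+c\in L^1$ because $m\in L^p(Q_T)$ by Proposition \ref{estimates}, and Proposition \ref{peggiodellagerma} produces
\[
\sup_t\|\tilde\rho(t)\|_{-(1+\alpha),D}+\|\tilde\rho\|_{L^p}\le C\bigl(\|\rho_0\|_{-(1+\alpha)}+\|c\|_{L^1}+\|Dz\|_\infty\|m\|_{L^p}\bigr).
\]
Chaining the two estimates on a short subinterval $[T-\delta,T]$ yields a contraction for $\delta$ small, giving existence on $[T-\delta,T]$; a backward iteration then covers $[t_0,T]$. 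Alternatively, the Hölder continuity in time of $\tilde\rho$ with values in $\mathcal{C}^{-(1+\alpha),D}$, derived as in the proof of Proposition \ref{estimates}, provides the compactness needed for a Schauder argument on the whole interval, with continuity of $\Phi$ supplied by the stability statement of Proposition \ref{peggiodellagerma}.

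Uniqueness and the stability estimates come from the Lasry-Lions monotonicity identity. If $(z_1,\rho_1)$ and $(z_2,\rho_2)$ share all data, the difference $(z,\rho)$ solves \eqref{linear} with $h=c=z_T=\rho_0=0$. Pairing the HJB for $z$ with $\rho$ and the weak form of the FP for $\rho$ with $z$ (which is a legitimate test function thanks to $z\in\mathcal{C}^{1,2+\alpha}$ with $z_{|\partial\Omega}=0$), and subtracting, the first-order terms cancel and one is left with
\[
\int_{t_0}^T\into mH_{pp}(t,x,Du)Dz\cdot Dz\,dx\,dt+\int_{t_0}^T\langle\dm{F}(t,\cdot,m)(\rho(t)),\rho(t)\rangle\,dt+\langle\dm{G}(\cdot,m(T))(\rho(T)),\rho(T)\rangle=0.
\]
Strict positivity of $H_{pp}$ in \textit{(iii)} together with the monotonicity of $F,G$ in \textit{(iv)}--\textit{(v)} forces $Dz\equiv0$ on $\{m>0\}$, which through the HJB equation gives $z\equiv0$, and then the FP equation gives $\rho\equiv0$.

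The refined bound \eqref{sbrigati} is obtained by lifting one Hölder notch when $(h,c,z_T)=0$ and $\mu_0\in\mathcal{C}^{-(2+\alpha)}$: the drift $H_p(t,x,Du)\in\mathcal{C}^{\alpha/2,\alpha}$ allows invoking estimate \eqref{forsemisalvo} for the FP of $\mu$, while the pairings $\dm{F}(t,x,m)(\mu)$ and $\dm{G}(x,m(T))(\mu(T))$ are now controlled in $\mathcal{C}^{0,\alpha}$ and $\mathcal{C}^{2+\alpha,D}$ respectively by $C\sup_t\|\mu(t)\|_{-(2+\alpha),D}$, using the full $\mathcal{C}^{2+\alpha,D}$ regularity of $\dm{F},\dm{G}$ in $y$ from \textit{(iv)}--\textit{(v)}; the two bounds then close into \eqref{sbrigati}. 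The chief obstacle throughout is keeping Hölder scales in sync across the coupling: it is precisely the Dirichlet vanishing in \textit{(vi)} that places $\dm{F}(t,x,m,\cdot)$ and $\dm{G}(x,m,\cdot)$ in the Dirichlet Hölder spaces in $y$, thereby guaranteeing both the compatibility condition required by Lemma \ref{sonobravo} for the backward HJB and the proper functional duality with $\mathcal{C}^{-(1+\alpha),D}$ and $\mathcal{C}^{-(2+\alpha),D}$.
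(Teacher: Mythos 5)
Your skeleton --- decouple the system, estimate the backward HJB via Lemma \ref{sonobravo} and the forward Fokker--Planck equation via Proposition \ref{peggiodellagerma}, close by a fixed point, prove uniqueness by the Lasry--Lions identity, then lift the H\"older scale for \eqref{sbrigati} --- is the same as in the proof the paper defers to (\emph{Propositions 5.8 and 5.11} of \cite{Ricciardi}, following \cite{card}). The genuine gap is in how you close the estimates. Chaining $\norm{z}\amv\le C\bigl(\sup_t\norm{\rho(t)}_{-(1+\alpha),D}+\norm{h}_{0,\alpha}+\norm{z_T}_{2+\alpha}\bigr)$ with $\sup_t\norm{\tilde\rho(t)}_{-(1+\alpha),D}\le C\bigl(\norm{\rho_0}_{-(1+\alpha)}+\norm{c}_{L^1}+\norm{H_{pp}Dz}_\infty\norm{m}_{L^1(Q_T)}\bigr)$ produces a loop constant of order $C(T-t_0)$, so both your contraction and the invariance of a bounded set needed for Schauder work only for $T-t_0$ small; moreover the a priori inequality $f\le CM+C(T-t_0)f$ for $f=\sup_t\norm{\rho(t)}_{-(1+\alpha),D}$ cannot be closed by Gronwall, because the backward equation makes $z(s)$ depend on the \emph{future} of $\rho$. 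The ``backward iteration'' you invoke to cover $[t_0,T]$ is exactly the delicate point for forward--backward systems: on $[T-2\delta,T-\delta]$ the terminal condition for $z$ is the decoupling field of the previous step evaluated at $\rho(T-\delta)$, and without a uniform bound on the operator norm of that field the admissible step size shrinks and the iteration need not reach $t_0$. Linearity does not rescue this: linear forward--backward systems can fail to be solvable on long horizons.

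The missing ingredient is that the crossed-duality identity you deploy only for uniqueness is precisely the source of the global-in-time a priori estimate. Pairing the $z$-equation with $\rho$ and the weak form of the $\rho$-equation with $z$ for a putative solution with \emph{general} data yields
\begin{equation*}
\intif mH_{pp}Dz\cdot Dz\,dx\,dt+\intif \dm{F}(\rho)\,\rho\,dx\,dt+\langle\dm{G}(\rho(T)),\rho(T)\rangle=\langle\rho_0,z(t_0)\rangle-\intif c\cdot Dz\,dx\,dt+\intif h\,\rho\,dx\,dt+\langle\rho(T),z_T\rangle\,,
\end{equation*}
whose left-hand side is nonnegative by \textit{(iii)}--\textit{(v)}; this controls $\intif m\lvert Dz\rvert^2$ by $CM\bigl(\norm{z}\amv+\sup_t\norm{\rho(t)}_{-(1+\alpha),D}\bigr)$, and since $\norm{mH_{pp}Dz}_{L^1}\le C\bigl(\intif m\lvert Dz\rvert^2\bigr)^{1/2}$ by Cauchy--Schwarz and $\norm{m(t)}_{L^1}\le1$, feeding this back into the Fokker--Planck estimate closes \eqref{stimelin} by Young's inequality with no smallness of $T$; existence then follows by Leray--Schauder (or by your contraction, now on intervals of uniformly controlled length). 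The same remark applies at the $-(2+\alpha)$ scale: your two bounds for \eqref{sbrigati} do not ``close'' without this step. A secondary imprecision: the compatibility $z(T,\cdot)_{\vert\partial\Omega}=0$ required by Lemma \ref{sonobravo} needs $\dm{G}(x,m,\cdot)\equiv0$ for $x\in\partial\Omega$, which follows from $G(x,m)_{\vert x\in\partial\Omega}=0$ in \textit{(vi)} (differentiate in $m$, the derivative being uniquely determined on $\mathcal{P}^{sub}$), not from the vanishing of $\dm{G}$ in the $y$-variable as you assert.
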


Observe that the analogue of this Theorem in the Neumann case is given by \cite[Propositions 5.8,5.11]{Ricciardi}. However, even in this case, we have refined the estimate \eqref{sbrigati} in order to adjust the issues raised in \cite{jakob}. See also Remark \ref{rem:espen}.
\begin{proof}
As always, we can assume $t_0 = 0$ without loss of generality. The main idea is to apply Schaefer's Theorem.

\emph{Step 1: Definition of the map $\mathbf{\Phi}$ satisfying Schaefer's Theorem}.
We set $X := \mathcal{C}([0,T]; \mathcal{C}^{-(1+\alpha),D})$, endowed with the norm
\begin{equation*}
\norm{\phi}_X := \sup_{t \in [0,T]} \norm{\phi(t)}_{-(1+\alpha),D}\,.
\end{equation*}

For $\rho \in X$, we consider the classical solution $z$ of the following equation
\begin{equation}
\label{zlin}
\begin{cases}
-z_t - \tr z + H_p(t,x, Du)Dz = \mathlarger{\dm{F}}(t,x, m(t))(\rho(t)) + h(t,x)\,,\\
z(T) = \mathlarger{\dm{G}}(x, m(T))(\rho(T)) + z_T\,,\\
z|_{\partial\Omega} = 0\,.
\end{cases}
\end{equation}

We note that, from Hypotheses \ref{ipotesi}, we have
$$\frac{\delta F}{\delta m}(t, x, m(t), y)|_{y \in \partial\Omega} = 0, \quad \frac{\delta G}{\delta m}(x, m(T), y)|_{y \in \partial\Omega} = 0$$
Hence, compatibility conditions are satisfied for equation \eqref{zlin} and, from \emph{Theorem 5.1.21} of \cite{lunardi}, $z$ satisfies
\begin{equation}\label{stimz}
\begin{split}
\norm{z}_{1,2+\alpha} &\leq C\left(\norm{z_T}_{2+\alpha} + \sup_{t \in [0,T]} \norm{\rho(t)}_{-(2+\alpha),D} + \norm{h}_{0,\alpha}\right)\\
&\leq C\left(M + \sup_{t \in [0,T]} \norm{\rho(t)}_{-(1+\alpha),D}\right)\,,
\end{split}
\end{equation}
where we also use hypothesis $(vii)$ of \ref{ipotesi}, for the boundary condition of $\dm{G}$.

Then we define $\mathbf{\Phi}(\rho) := \tilde{\rho}$, where $\tilde{\rho}$ is the solution in the sense of Definition \ref{canzonenuova} to:
\begin{equation}
\label{plin}
\begin{cases}
\tilde{\rho}_t - \mathrm{div}(a(x)D\tilde{\rho}) - \mathrm{div}(\tilde{\rho}(H_p(t,x,Du))) - \mathrm{div}(mH_{pp}(t,x,Du)Dz + c) = 0\\
\tilde{\rho}(0) = \rho_0\\
\tilde{\rho}|_{\partial\Omega} = 0
\end{cases}\,.
\end{equation}

Thanks to Proposition \ref{peggiodellagerma}, we have $\tilde{\rho} \in X$. We want to prove that the map $\mathbf{\Phi}$ is continuous and compact.

For the compactness, let $\{\rho_n\}_n \subset X$ be a subsequence with $\norm{\rho_n}_X \leq C$ for a certain $C > 0$. We consider for each $n$ the solutions $z_n$ and $\tilde{\rho}_n$ of \eqref{zlin} and \eqref{plin} associated to $\rho_n$.

Using \eqref{stimz}, we have $\norm{z_n}_{1,2+\alpha} \leq C_1$, where $C_1$ depends on $C$. Then, thanks to Ascoli-Arzelà's Theorem, $\exists z$ s.t. $z_n \to z$ up to subsequences at least in $\mathcal{C}([0,T]; \mathcal{C}^1(\Omega))$.

Using the pointwise convergence of $Dz_n$ and the $L^p$ boundedness of $m$ stated in Proposition \ref{estimates}, we immediately obtain
$$
mH_{pp}(t,x,Du)Dz_n + c \to mH_{pp}(t,x,Du)Dz + c \quad \text{in } L^1(Q_T)\,,
$$
which immediately implies
$$
\mathrm{div}(mH_{pp}(t,x,Du)Dz_n + c) \to \mathrm{div}(mH_{pp}(t,x,Du)Dz + c) \quad \text{in } L^1(W^{-1,\infty})\,.
$$

Hence, stability results proved in Proposition \ref{peggiodellagerma} prove that $\tilde{\rho}_n \to \tilde{\rho}$ in $X$, where $\tilde{\rho}$ is the solution related to $Dz$. This proves the compactness result.

The continuity of $\Phi$ can be proved using the same computations as for the compactness.

Finally, in order to apply Schaefer's theorem, we have to prove that
$$
\exists M > 0 \text{ s.t. } \rho = \sigma\mathbf{\Phi}(\rho) \text{ and } \sigma \in [0,1] \Rightarrow \norm{\rho}_X \leq M\,.
$$

We will prove in the next step that, if $\rho = \sigma\mathbf{\Phi}(\rho)$, then the couple $(z,\rho)$ satisfies \eqref{stimelin}. This allows us to apply Schaefer's theorem and also gives us the desired estimate \eqref{stimelin}, since each solution $(z,\rho)$ of the system satisfies $\rho = \sigma\mathbf{\Phi}(\rho)$ with $\sigma = 1$.

\emph{Step 2: Estimate of $\rho$ and $z$}. Let $(\rho,\sigma) \in X \times [0,1]$ such that $\rho = \sigma\mathbf{\Phi}(\rho)$. Then the couple $(z,\rho)$ satisfies
\begin{equation*}
\begin{cases}
-z_t - \mathrm{tr}(a(x)D^2z) + H_p(t,x,Du)Dz = \mathlarger{\dm{F}}(t,x,m(t))(\rho(t)) + h(t,x)\\
\rho_t - \mathrm{div}(a(x)D\rho) - \mathrm{div}(\rho H_p(t,x,Du)) - \sigma\mathrm{div}(mH_{pp}(t,x,Du)Dz + c) = 0\\
z(T,x) = \mathlarger{\dm{G}}(x,m(T))(\rho(T)) + z_T(x) \quad \rho(0) = \sigma\rho_0\\
z|_{\partial\Omega} = 0 \quad \rho|_{\partial\Omega} = 0
\end{cases}\,.
\end{equation*}

We want to use $z$ as a test function for the equation of $\rho$. This is allowed since $z$ satisfies \eqref{hjbfp} with
\begin{align*}
\psi = \dm{F}(t,x,m(t))(\rho(t)) + h(t,x) \in L^\infty(\Omega), \quad \xi = \dm{G}(x,m(T))(\rho(T)) + z_T(x) \in \mathcal{C}^{1+\alpha,D}
\end{align*}

From the weak formulation of $\rho$, we obtain:
\begin{equation*}
\begin{split}
&\int_\Omega \left(\rho(T,x)z(T,x) - \sigma\rho_0(x)z(0,x)\right)dx = -\sigma\int_0^T\int_\Omega \langle c, Dz\rangle dxdt\\
&-\int_0^T\int_\Omega \rho(t,x)\left(\dm{F}(t,x,m(t))(\rho(t)) + h\right)dxdt - \sigma\int_0^T\int_\Omega m\langle H_{pp}(t,x,Du)Dz, Dz\rangle dxdt\,.
\end{split}
\end{equation*}

Using the terminal condition of $z$ and the monotonicity of $F$ and $G$ $(vi)$ in \ref{ipotesi}, we get a first estimate:
\begin{equation}\label{stimasigma}
\begin{split}
\sigma\int_0^T\int_\Omega m\langle H_{pp}(t,x,Du)Dz, Dz\rangle dxdt
\leq &\sup_{t\in[0,T]}\norm{\rho(t)}_{-(2+\alpha),D}\norm{z_T}_{2+\alpha} + \norm{\rho}_{L^p}\norm{h}_\infty\\
&+ \norm{z}_{1,2+\alpha}\left(\norm{\rho_0}_{-(2+\alpha),D} + \norm{c}_{L^1}\right)\\
\leq &M\left(\sup_{t\in[0,T]}\norm{\rho(t)}_{-(1+\alpha),D} + \norm{\rho}_{L^1} + \norm{z}_{1,2+\alpha}\right)\,.
\end{split}
\end{equation}

We already know an initial estimate on $z$ in \eqref{stimz}. Now we need to estimate $\rho$.

Using \eqref{stimefokker} from Proposition \ref{peggiodellagerma} we obtain
\begin{equation}\label{duality}
\sup_{t\in[0,T]}\norm{\rho}_{-(1+\alpha),D} + \norm{\rho}_{L^p} \leq C\left(\norm{\sigma mH_{pp}(t,x,Du)Dz}_{L^1} + \norm{c}_{L^1} + \norm{\rho_0}_{-(1+\alpha)}\right)
\end{equation}

As regards the first term in the right-hand side, we can use Hölder's inequality and \eqref{stimasigma} to obtain
\begin{align*}
&\norm{mH_{pp}(t,x,Du)Dz}_{L^1} = \sigma\sup_{\substack{\norm{\phi}_\infty \leq 1\\\phi\in L^\infty(Q_T;\mathbb{R}^d)}}\int_0^T\int_\Omega m\langle H_{pp}(t,x,Du)Dz, \phi\rangle dxdt\\
&\leq \sigma\left(\int_0^T\int_\Omega m\langle H_{pp}(t,x,Du)Dz, Dz\rangle dxdt\right)^{1/2}\left(\int_0^T\int_\Omega m\langle H_{pp}(t,x,Du)\phi, \phi\rangle dxdt\right)^{1/2}\\
&\leq M^{1/2}\left(\sup_{t\in[0,T]}\norm{\rho(t)}^{1/2}_{-(1+\alpha),D} + \norm{\rho}^{1/2}_{L^1} + \norm{z}^{1/2}_{1,2+\alpha}\right)\,
\end{align*}

Putting these estimates into \eqref{duality} we obtain
\begin{align*}
\sup_{t\in[0,T]}\norm{\rho}_{-(1+\alpha),D} + \norm{\rho}_{L^p} \leq C\left(M + M^{1/2}\left(\sup_{t\in[0,T]}\norm{\rho(t)}^{1/2}_{-(1+\alpha),D} + \norm{\rho}^{1/2}_{L^1} + \norm{z}^{1/2}_{1,2+\alpha}\right)\right)\,.
\end{align*}

Using a generalized Young's inequality with suitable coefficients, we get
\begin{align}\label{stimarho}
\sup_{t\in[0,T]}\norm{\rho}_{-(1+\alpha),D} + \norm{\rho}_{L^p} \leq C\left(M + M^{1/2}\norm{z}^{1/2}_{1,2+\alpha}\right)\,.
\end{align}

This gives us an initial estimate for $\rho$, depending on the estimate of $z$.

Coming back to \eqref{stimz}, \eqref{stimarho} implies
\begin{align*}
\norm{z}_{1,2+\alpha} \leq C\left(M + M^{1/2}\norm{z}^{1/2}_{1,2+\alpha}\right)\,.
\end{align*}

Using a generalized Young's inequality with suitable coefficients, this implies
$$
\norm{z}_{1,2+\alpha} \leq CM\,.
$$

Plugging this estimate in \eqref{stimarho}, we finally obtain
\begin{align*}
\norm{z}_{1,2+\alpha} + \sup_{t\in[0,T]}\norm{\rho}_{-(1+\alpha),D} + \norm{\rho}_{L^p} \leq CM\,.
\end{align*}

This concludes the existence part.

\emph{Step 3. Uniqueness}. Let $(z_1,\rho_1)$ and $(z_2,\rho_2)$ be two solutions of \eqref{linear}. Then the couple $(z,\rho) := (z_1-z_2, \rho_1-\rho_2)$ satisfies the following linear system:
\begin{equation*}
\begin{cases}
-z_t - \mathrm{tr}(a(x)D^2z) + H_p(t,x,Du)Dz = \mathlarger{\dm{F}}(t,x,m(t))(\rho(t)) = 0\,,\\
\rho_t - \mathrm{div}(a(x)D\rho) - \mathrm{div}(\rho H_p(t,x,Du)) - \mathrm{div}(mH_{pp}(t,x,Du)Dz) = 0\,,\\
z(T,x) = \mathlarger{\dm{G}}(x,m(T))(\rho(T))\,, \quad \rho(t_0) = 0\,,\\
z|_{\partial\Omega} = 0\,, \quad \rho|_{\partial\Omega} = 0\,,
\end{cases}
\end{equation*}

i.e., a system of the form \eqref{linear} with $h = c = z_T = \rho_0 = 0$. Then estimation \eqref{stimelin} tells us that
$$
\norm{z}_{1,2+\alpha} + \sup_{t\in[0,T]}\norm{\rho}_{-(1+\alpha),D} + \norm{\rho}_{L^p} \leq 0\,,
$$
and so $z = 0$, $\rho = 0$. This proves the estimate \eqref{stimelin}.

Concerning the second inequality \eqref{sbrigati}, we consider the solution $(v,\mu)$ obtained above. Since $\mu$ satisfies $\mu = \sigma\Phi(\mu)$ with $\sigma = 1$, we can use \eqref{stimz} with $z_T = h = 0$ and obtain
\begin{equation}\label{cumnupnat}
\norm{v}_{1,2+\alpha} \leq C\sup_{t\in[0,T]}\norm{\mu(t)}_{-(2+\alpha),D}\,.
\end{equation}

We want to estimate the right-hand side. Using Corollary \ref{corollary} (which addresses the regularity issues raised in \cite{jakob}) we have
\begin{equation}\label{ngroc}
\sup_{t\in[0,T]}\norm{\mu(t)}_{-(2+\alpha),D}+\norm{\mu}_{-\left(\frac\alpha2,\alpha\right)} \leq C\left(\norm{\mu_0}_{-(2+\alpha)} + \norm{mH_{pp}(t,x,Du)Dv}_{L^1}\right)\,.
\end{equation}

The last term is estimated, as above, by
\begin{equation}\label{mannaggia}
\norm{\sigma mH_{pp}(t,x,Du)Dv}_{L^1} \leq C\left(\int_0^T\int_\Omega m\langle H_{pp}(t,x,Du)Dv, Dv\rangle dxdt\right)^{1/2}\,.
\end{equation}

The right-hand side term can be bounded using \eqref{stimasigma} with $h = z_T = c = 0$:
\begin{equation}\label{crist}
\begin{split}
\int_0^T\int_\Omega m\langle H_{pp}(t,x,Du)Dv, Dv\rangle dxdt
\leq \norm{v}_{1,2+\alpha}\norm{\mu_0}_{-(2+\alpha)}\,.
\end{split}
\end{equation}

Hence, plugging estimates \eqref{mannaggia} and \eqref{crist} into \eqref{ngroc} we obtain
\begin{equation}\label{probbiatottquanta}
\sup_{t\in[0,T]}\norm{\mu(t)}_{-(2+\alpha),D}+\norm{\mu}_{-\left(\frac\alpha2,\alpha\right)}  \leq C\left(\norm{\mu_0}_{-(2+\alpha)} + \norm{v}_{1,2+\alpha}^{1/2}\norm{\mu_0}_{-(2+\alpha),D}^{1/2}\right)\,.
\end{equation}

Coming back to \eqref{cumnupnat} and using a generalized Young's inequality, we get
$$
\norm{v}_{1,2+\alpha} \leq C\norm{\mu_0}_{-(2+\alpha)}\,,
$$
and finally, substituting the last estimate into \eqref{probbiatottquanta}, we obtain \eqref{sbrigati} and conclude.
\end{proof}

Throughout the rest of the paper, we will denote with $(v,\mu)$ a solution to the system \eqref{linear}, with $h=c=z_T=0$ and  $\mu_0:=\rho_0$. We will refer to this system as the \emph{pure linearized system.} Instead, the general system \eqref{linear}, with solution $(z,\rho)$, will be called the \emph{general linearized system}.

To prove $U \in \mathcal{C}^1$ as well as the related representation formula \eqref{reprform}, we have to prove that the pure linearized system has a fundamental solution, which is the content of the next proposition.

\begin{proposition}
Let hypotheses \ref{ipotesi} hold. Then there exists a function $K:[0,T]\times\Omega\times\mathcal{P}^{sub}(\Omega)\times\Omega\to\R$ such that, for any solution $(v,\mu)$, with initial data $(t_0,m_0,\mu_0)$, we have
\begin{equation}\label{repres}
v(t_0,x)=_{-(1+\alpha)}\!\!\langle \mu_0,K(t_0,x,m_0,\cdot)\rangle_{1+\alpha}
\end{equation}
Moreover, $K$ is twice differentiable with respect to both $x$ and $y$, and it holds
\begin{equation}\label{kappa}
\sup\limits_{(t,m)\in[0,T]\times\mathcal{P}^{sub}(\Omega)}\norm{K(t,\cdot,m,\cdot)}_{2+\alpha,2+\alpha}\le C\,,
\end{equation}

\begin{proof}
Let $\mu_0=\delta_y$ be the Dirac function at $y\in\Omega$. We define $K(t_0,x,m_0,y)=v(t_0,x;\delta_y)$, where $v(\cdot,\cdot;\mu_0)$ indicates the function $v$ related to the pure linearized system with initial data $\mu_0$. Exploiting \eqref{stimelin}, we have that $K$ is twice differentiable w.r.t. $x$, and it holds
$$
\norm{K(t_0,\cdot,m_0,y)}_{2+\alpha}\le C\norm{\delta_y}_{-(1+\alpha)}=C
$$
Furthermore, the linearity character of \eqref{linear} implies
$$
\frac{K(t_0,x,m_0,y+he_i)-K(t_0,x,m_0,y)}h=v(t_0,x;\Delta_h^i\delta_{y})\,,
$$
where we use the notation $\Delta_h^i\delta_{y}=\frac1h(\delta_{y+he_i}-\delta_y)$. The linear character of the pure linearized system directly implies that the solution $(v,\mu)$ is stable with respect to the initial condition $\mu_0$, allowing to pass to the limit and obtain
$$
\frac{\partial K}{\partial y_i}(t_0,x,m_0,y)=v(t_0,x;-\partial_{y_i}\delta_y)\,,
$$
with the derivative that has to be intended in a distributional sense. As to prove the existence and second derivatives' bounds, we consider the incremental ratio
\begin{equation}\label{alotteriarubabbeh}
R^h_{i,j}(x,y):=\frac{\partial_{y_i}K(t_0,x,m_0,y+he_j)-\partial_{y_i}K(t_0,x,m_0,y)}{h}\,.
\end{equation}
Hence, estimate \eqref{sbrigati} together with Lagrange Theorem imply that,
for $\vert l\vert \le 2,$ we have
\begin{small}
\begin{align*}	&\hspace{0.9cm}\left\vert D^l_xR^h_{i,j}(x,y)-D^l_xR^k_{i,j}(x,y)\right\vert =\left\vert D^l_xv\left(t_0,x;\Delta_h^j(-\partial_{y_i}\delta_y)-\Delta_k^j(-\partial_{y_i}\delta_y)\right)\right\vert \\&\le C\norm{\Delta_h^j(-\partial_{y_i}\delta_y)-\Delta_k^j(-\partial_{y_i}\delta_y)}_{-(2+\alpha)}\\&=\sup\limits_{\norm{\phi}_{2+\alpha}\le 1}\left(\frac{\partial_{y_i}\phi(y+he_j)-\partial_{y_i}\phi(y)}{h}-\frac{\partial_{y_i}\phi(y+ke_j)-\partial_{y_i}\phi(y)}{k}\right)\\&=\sup\limits_{\norm{\phi}_{2+\alpha}\le 1}\left(\partial^2_{y_iy_j}\phi(y_{\phi,h})-\partial^2_{y_iy_j}\phi(y_{\phi,k})\right)\le\sup\limits_{\norm{\phi}_{2+\alpha}\le 1}\vert y_{\phi,h}-y_{\phi,k}\vert ^\alpha\le \vert h\vert ^\alpha+\vert k\vert ^\alpha,
\end{align*}
\end{small}
for a certain $y_{\phi,h}$, resp. $y_{\phi,k}$ in the line segment between $y$ and $y+he_j$ (resp. $y+ke_j$), and where we have used the same notation as seen above for $\Delta_h^j(-\partial_{y_i}\delta_y)$.

The latter proves that \eqref{alotteriarubabbeh}, together with its first and second derivative w.r.t $x$, are Cauchy sequences in $h$, implying that  $D^l_x\dm{U}$ is twice differentiable w.r.t. $y$ and for all $\vert l\vert \le2\,$.

As to conclude estimate \eqref{kappa}, we prove the estimate for the second order derivatives w.r.t. y, the first derivative being simpler. By estimate \eqref{sbrigati} on $v$,
if  $y,y'\in\Omega$, then
\begin{small}
\begin{align*}	\norm{R^h_{i,j}(\cdot,y)-R^h_{i,j}(\cdot,y')}_{2+\alpha}\!\!&=\norm{v\big(t_0,\cdot;\Delta^j_h(-\partial_{y_i}\delta_y+\partial_{y_i}\delta_{y'})\big)}_{2+\alpha}\\&\le C\norm{\Delta^j_h(-\partial_{y_i}\delta_y+\partial_{y_i}\delta_{y'})}_{-(2+\alpha)}.
\end{align*}
\end{small}

Passing to the limit for $h\to 0$, we have
$$
\Delta^j_h(-\partial_{y_i}\delta_y+\partial_{y_i}\delta_{y'})\longrightarrow\partial_{y_j}\partial_{y_i}\delta_y-\partial_{y_j}\partial_{y_i}\delta_{y'} \,,\,\mbox{in }\mathcal{C}^{-(2+\alpha)}\,.
$$

Then, by Ascoli-Arzel\`{a} and previously obtained convergence result for $R^h_{i,j}$, we have
\begin{align*}
&\norm{\partial^2_{y_iy_j}\left(K(t_0,\cdot,m_0,y)-K(t,\cdot,m_0,y')\right)}_{2+\alpha} \\ & \displaystyle \hspace{2cm} \le C\norm{\partial_{y_iy_j}^2(\delta_y-\delta_{y'})}_{-(2+\alpha)}\le C\vert y-y'\vert ^\alpha\,,
\end{align*}
which proves \eqref{kappa}. The representation formula \eqref{repres} is an immediate consequence of the linear character of the equation and of the density of the Dirac functions generated set, hence concluding the proof.
\end{proof}
\end{proposition}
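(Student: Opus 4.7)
The natural plan is to construct $K$ explicitly as a fundamental solution: for each $y\in\Omega$, set $K(t_0,x,m_0,y):=v(t_0,x;\delta_y)$, where $v(\cdot,\cdot;\mu_0)$ denotes the $v$-component of the unique solution of the pure linearized system with initial datum $\mu_0$. Since $\|\delta_y\|_{-(2+\alpha)}\le C$ uniformly in $y$, estimate \eqref{sbrigati} immediately gives $\|K(t_0,\cdot,m_0,y)\|_{2+\alpha}\le C$ uniformly in $(t_0,m_0,y)$, which handles the $x$-regularity in \eqref{kappa}. The representation formula \eqref{repres} will follow by first verifying it on linear combinations of Dirac masses, where it is an immediate consequence of the linearity of \eqref{linear} and uniqueness, then passing to general $\mu_0\in\mathcal{C}^{-(1+\alpha)}$ via the density guaranteed by Lemma \ref{prop:espen} together with the continuity estimate \eqref{stimelin}.

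For differentiability in $y$, I would exploit linearity and take finite differences. Namely, for $h\neq 0$ and $i=1,\dots,d$, the quotient $h^{-1}(K(t_0,x,m_0,y+he_i)-K(t_0,x,m_0,y))$ equals $v(t_0,x;\Delta_h^i\delta_y)$ with $\Delta_h^i\delta_y:=h^{-1}(\delta_{y+he_i}-\delta_y)$. Applying \eqref{sbrigati} and a testing argument against $\phi\in\mathcal{C}^{2+\alpha}$ (via Lagrange's theorem) shows $\Delta_h^i\delta_y\to -\partial_{y_i}\delta_y$ in $\mathcal{C}^{-(2+\alpha)}$, so the stability of the pure linearized system in that dual norm yields a limit defining $\partial_{y_i}K(t_0,x,m_0,y)$, together with the bound in $\mathcal{C}^{2+\alpha}_x$. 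Iterating this argument on the second difference quotient $R^h_{i,j}$ (as in \eqref{alotteriarubabbeh}) and checking that $\{R^h_{i,j}(\cdot,y)\}_h$ is Cauchy in $\mathcal{C}^{2+\alpha}_x$ produces the mixed second $y$-derivatives, again bounded in $\mathcal{C}^{2+\alpha}_x$.

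The main obstacle, and the step that requires the most care, is obtaining Hölder regularity in $y$ of the second $y$-derivatives of $K$ to close the $\mathcal{C}^{2+\alpha,2+\alpha}$ estimate. The strategy is to test the difference $\partial^2_{y_iy_j}(\delta_y-\delta_{y'})$ against arbitrary $\phi\in\mathcal{C}^{2+\alpha}$ with $\|\phi\|_{2+\alpha}\le 1$: by the Hölder continuity of $\partial^2_{y_iy_j}\phi$ we get $\|\partial^2_{y_iy_j}(\delta_y-\delta_{y'})\|_{-(2+\alpha)}\le C|y-y'|^{\alpha}$, and plugging this into \eqref{sbrigati} applied to the appropriate linear combination of Dirac derivatives yields $\|\partial^2_{y_iy_j}K(t_0,\cdot,m_0,y)-\partial^2_{y_iy_j}K(t_0,\cdot,m_0,y')\|_{2+\alpha}\le C|y-y'|^{\alpha}$. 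This delicate interplay between the negative Hölder norm of distributional derivatives of Dirac masses and the positive Hölder estimate on $v$ is the key technical ingredient; everything else is essentially bookkeeping via the linearity of \eqref{linear} and Proposition \ref{linearD}.
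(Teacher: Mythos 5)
Your proposal is correct and takes essentially the same approach as the paper: you define $K$ as the fundamental solution $v(t_0,x;\delta_y)$, obtain the $y$-derivatives via difference quotients of Dirac masses controlled in the $\mathcal{C}^{-(2+\alpha)}$ norm through the stability of the pure linearized system, close the H\"older-in-$y$ estimate by testing $\partial^2_{y_iy_j}(\delta_y-\delta_{y'})$ against unit-norm $\mathcal{C}^{2+\alpha}$ functions, and recover \eqref{repres} by linearity and density of combinations of Dirac masses. The only cosmetic difference is whether one cites \eqref{stimelin} or \eqref{sbrigati} for the initial bound in $x$, which does not change the argument.
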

We are now ready to consider the main topic of this section: we want to prove that the function $K$ is actually the derivative of $U$ with respect to the measure. Let us underline that the differentiability with respect to the measure $m$ will be the key for proving $U$ is indeed a classical solution of the Master Equation \eqref{Master}.

\begin{theorem}
Suppose hypotheses \ref{ipotesi} hold. Then the function $U$ defined by \eqref{eq:u} is differentiable with respect to $m$, with the derivative given by
$$
\dm{U}(t,x,m,y)=K(t,x,m,y)\,.
$$
\begin{proof}
Within the present proof, given two functions $a_1,a_2$, we define $a_{1+\tau}:=\tau a_2+(1-\tau)a_1$, for $\tau\in[0,1]$.

We will prove a more general fact, the representation formula for $U$ being then a direct consequence of it.

In particular, if $(u_1,m_1)$ and $(u_2,m_2)$ are two solutions of \eqref{meanfieldgames}, with initial conditions $(t_0,m_{01})$ and $(t_0,m_{02})$, and $(v,\mu)$ is the solution of the pure linearized system related to $(u_2,m_2)$, with initial condition $(t_0,m_{01}-m_{02})$, then a sort of first-order Taylor expansion of $U$ with respect to $m$ holds, namely:
\begin{equation}\label{boundmder}
\norm{u_1-u_2-v}\amv+\supo\norm{m_1(t)-m_2(t)-\mu(t)}\amc\le C\dw(m_{01},m_{02})^{2}\,.
\end{equation}

As to prove above {\it expansion}, let us start defining $(z,\rho)=(u_1-u_2-v,m_1-m_2-\mu)$. Then $(z,\rho)$ satisfies the general linearized system \eqref{linear} related to $(u_2,m_2)$, with data $h=h_1+h_2$, $c=c_1+c_2$ and $z_T$ given by:
\begin{align*}
&h_1=-\int_0^1 (H_p(t,x,Du_{1+s})-H_p(t,x,Du_2))\cdot D(u_1-u_2)\,ds\,, \smallskip \\
&h_2=\int_0^1\!\into\left(\dm{F}(t,x,m_{1+s}(t),y)-\dm{F}(t,x,m_2(t),y)\right)(m_1(t)-m_2(t))(dy)ds, \smallskip \\
&c_1(t)=(m_1(t)-m_2(t))H_{pp}(t,x,Du_2)(Du_1-Du_2)\,,\smallskip\\
&c_2(t)=m_1\int_0^1\left(H_{pp}(t,x,Du_{1+s})-H_{pp}(t,x,Du_2)\right)(Du_1-Du_2)\,ds\,,\smallskip \\
&z_T=\int_0^1\into\left(\dm{G}(x,m_{1+s}(T),y)-\dm{G}(x,m_2(T),y)\right)(m_1(T)-m_2(T))(dy)ds\,. \smallskip
\end{align*}
Applying \eqref{stimelin} one has
\begin{equation}\label{rogueuno}
\norm{z}\amv+\supo\norm{\rho(t)}\amc\le C\left(\norm{h}_{0,\alpha}+\norm{c}_{L^1}+\norm{z_T}_{2+\alpha}\right)\,.
\end{equation}

We want to estimate the right-hand side term  to obtain the desired Taylor expansion. As regards $h$, exploiting 
eq. \eqref{lipsch} and H\"{o}lder norm properties, we have:

\begin{align*}
\norm{h_1}_{0,\alpha}&=\norm{\int_0^1\int_0^1 s\, \langle H_{pp}(t,x,Du_{1+rs})\,(Du_1-Du_2)\,,\, (Du_1-Du_2)\rangle\,drds}_{0,\alpha}\\&\le C\dw(m_{01},m_{02})^2 \,,
\end{align*}
Analogously, again by eq. \eqref{lipsch} and exploiting  regularity of both $F$ and $G$, the same estimate also holds for $h_2$ and $z_T$. 
For the function $c$, we can write
\begin{align*}
\norm{c_1}_{L^1}=\intif H_{pp}(t,x,Du_2)(Du_1-Du_2)(m_1(t,dx)-m_2(t,dx))\,dt\\ \le C\norm{u_1-u_2}\amv\norm{m_1(t)-m_2(t)}\le C\dw(m_{01},m_{02})^{2}\,,
\end{align*}
then, proceeding as above, we have

\begin{align*}
& \norm{c_2}_{L^1}= \\ & \qquad \int_0^1\intif \left(H_{pp}(t,x,Du_{1+s})-H_{pp}(t,x,Du_2)\right)(Du_1-Du_2)m_1(t,dx)\,dtds\\
& \quad \le \,C\norminf{Du_1-Du_2}^2\le C\dw(m_0^1,m_0^2)^2\,.
\end{align*}
Substituting these estimates in \eqref{rogueuno}, we obtain \eqref{boundmder}. Using the representation \eqref{repres} for $v$, we get

$$
\begin{array}{c}\displaystyle	\norminf{U(t_0,\cdot,m_{01})-U(t_0,\cdot,m_{02})-\into K(t_0,\cdot,m_{02},y)(m_{01}-m_{02})(dy) } \medskip \\ 
\displaystyle \hspace{7cm} \le C\dw(m_{01},m_{02})^{2}\,. \medskip	\end{array}
$$

As a straightforward consequence, we have that $U$ is differentiable with respect to $m$ and
$$
\dm{U}(t,x,m,y)=K(t,x,m,y)\,,
$$
hence concluding the proof.
\end{proof}
\end{theorem}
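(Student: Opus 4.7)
My strategy is to establish a quantitative first--order expansion of $U$ in $m$ of the form
$$
U(t_0,x,m_{02})-U(t_0,x,m_{01})=\int_\Omega K(t_0,x,m_{01},y)\,d(m_{02}-m_{01})(y)+O\!\left(\dw(m_{01},m_{02})^2\right),
$$
uniformly in $x$. Combined with the continuity of $K$ in $m$ (which follows from the stability of the pure linearized system under perturbations of $(u,m)$ together with \eqref{kappa}), this immediately yields that $U$ is $\mathcal{C}^1$ in the sense of Definition \ref{dmu} with $\frac{\delta U}{\delta m}=K$. The identification of the derivative is then a matter of reading off the representation formula \eqref{repres}.

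\textbf{The main construction.} Let $(u_i,m_i)$ be the MFG solutions from $m_{0i}$, $i=1,2$, and let $(v,\mu)$ solve the pure linearized system at $(u_1,m_1)$ with initial datum $\mu_0=m_{02}-m_{01}$. I would set $z:=u_2-u_1-v$ and $\rho:=m_2-m_1-\mu$, and check by a direct computation that $(z,\rho)$ solves the \emph{general} linearized system \eqref{linear} at $(u_1,m_1)$ with source/terminal data obtained as second--order Taylor remainders: $h$ collects the remainder of $H(t,x,Du_2)-H(t,x,Du_1)-H_p(t,x,Du_1)\cdot D(u_2-u_1)$ together with $F(t,x,m_2)-F(t,x,m_1)-\frac{\delta F}{\delta m}(t,x,m_1)(m_2-m_1)$; $z_T$ is the analogous remainder for $G$; and $c$ collects the cross terms from the Fokker--Planck equation, namely $(m_2-m_1)H_{pp}(t,x,Du_1)D(u_2-u_1)$ and the remainder of $H_p$ composed with $Du_1,Du_2$.

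\textbf{Where the work lies.} The crucial and most delicate step is to bound $\|h\|_{0,\alpha}$, $\|c\|_{L^1}$ and $\|z_T\|_{2+\alpha}$ by $C\dw(m_{01},m_{02})^2$. The $H$--type remainders are handled by writing them as integrals of $H_{pp}$ against $(Du_2-Du_1)\otimes(Du_2-Du_1)$, and using the Lipschitz bound $\|u_1-u_2\|_{1,2+\alpha}\le C\dw(m_{01},m_{02})$ from \eqref{lipsch}; the $F$ and $G$ remainders are written as double integrals of $\frac{\delta F}{\delta m}(t,x,m_{1+\tau s},\cdot)-\frac{\delta F}{\delta m}(t,x,m_1,\cdot)$ paired with $m_2-m_1$, controlled using the Lipschitz hypotheses \ref{ipotesi}(iv)--(v) in the negative Hölder norm together with the bound on $\sup_t\|m_1(t)-m_2(t)\|_{-(1+\alpha),D}$ in \eqref{lipsch}. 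The main obstacle I expect is producing $\mathcal{C}^{0,\alpha}$ control on $h$ (rather than just $L^\infty$), because this forces use of the $\mathcal{C}^{2+\alpha}$ regularity of $u_i$ from \eqref{first} and of the full $\mathcal{C}^{\frac{\alpha}{2},\alpha,2+\alpha}$ regularity of $\frac{\delta F}{\delta m}$ from Hypotheses \ref{ipotesi}; analogously for $z_T$ at the $\mathcal{C}^{2+\alpha}$ level via the regularity of $\frac{\delta G}{\delta m}$.

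\textbf{Conclusion.} Once the remainders are controlled, I apply the a priori estimate \eqref{stimelin} of Proposition \ref{linearD} to $(z,\rho)$ with right--hand side $M\le C\dw(m_{01},m_{02})^2$, obtaining
$$
\|u_2-u_1-v\|_{1,2+\alpha}+\sup_{t}\|m_2(t)-m_1(t)-\mu(t)\|_{-(1+\alpha),D}\le C\dw(m_{01},m_{02})^2.
$$
Evaluating the first bound at $t_0$ and inserting the representation formula \eqref{repres} for $v(t_0,x)$ yields the desired expansion, hence $U\in\mathcal{C}^1$ with $\frac{\delta U}{\delta m}(t,x,m,y)=K(t,x,m,y)$.
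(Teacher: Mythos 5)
Your proposal is correct and follows essentially the same route as the paper: write the difference of the two MFG solutions minus the pure linearized solution as a solution of the general linearized system \eqref{linear} with second-order remainder data, bound $\|h\|_{0,\alpha}$, $\|c\|_{L^1}$, $\|z_T\|_{2+\alpha}$ by $C\,\dw(m_{01},m_{02})^2$ via \eqref{lipsch} and Hypotheses \ref{ipotesi}, apply \eqref{stimelin}, and conclude with the representation formula \eqref{repres}. The only (immaterial) difference is that you linearize around $(u_1,m_1)$ rather than $(u_2,m_2)$, and you explicitly flag the continuity of $K$ in $m$ needed for Definition \ref{dmu}, which the paper defers to the later Lipschitz estimate \eqref{lipsch1}.
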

Using \eqref{kappa} we also obtain the following strong regularity estimate for $\dm{U}$:
\begin{equation}\label{regdu}
\sup\limits_t\norm{\dm{U}(t,\cdot,m,\cdot)}_{2+\alpha,2+\alpha}\le C\,.
\end{equation}

This gives sense to the Master Equation \eqref{Master}, since the quantity $D_yD_mU$ is now well-defined. However, to apply Theorem \ref{settepuntouno} we still need to prove the continuity of $D_yD_mU$ in the measure variable.

In the next result we establish a Lipschitz bound for $\dm{U}$ with respect to the measure $m$. This estimate plays a crucial role, as it yields the continuity of $D_yD_mU$ with respect to $m$, a property that is essential for proving the existence of solutions to the Master Equation in the appropriate framework. Such regularity of $D_yD_mU$ was not available in the Neumann case, as reported in \cite{}. We also point out that, by adapting the same ideas, an analogous result could be obtained in the Neumann setting as well, although we shall not pursue this direction in the present work.

\begin{theorem} \label{MEderivativeregularity}
Let Assumptions \ref{ipotesi} hold. Then the derivative of the solution of the Master Equation $\dm{U}$ is Lipschitz continuous with respect to the measure $m$:

\begin{equation}
\label{lipsch1}
\sup_{t \in [0,T]} \sup_{{m}_1 \neq {m}_2} ( \mathbf{d}_1 ({m}_1, {m}_2))^{-1}  \Bigg\vert  \Bigg\vert   \dm{U} (t, \cdot, {m}_1, \cdot )-  \dm{U} (t, \cdot, {m}_2, \cdot ) \Bigg\vert  \Bigg\vert _{2+\alpha, 2+ \alpha} \, \leq \, C
\end{equation}
where $C$ depends on $d$, $F$, $G$, $H$ and $T$. 
\end{theorem}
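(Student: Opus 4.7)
The strategy is to reduce the Lipschitz bound on $\dm{U}$ to a comparison estimate for two pure linearized MFG systems built around nearby MFG trajectories, and then invoke the general linearized theory of Proposition \ref{linearD}.

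Fix $m_{01}, m_{02} \in \mathcal{P}^{sub}(\Omega)$, let $(u_i, m_i)$ solve \eqref{meanfieldgames} with initial datum $(t_0, m_{0i})$, and, for arbitrary $\mu_0$, let $(v_i, \mu_i)$ be the solution of the pure linearized system \eqref{linear} around $(u_i, m_i)$ with initial measure $\mu_0$. By the representation formula \eqref{repres}, $v_1(t_0, x) - v_2(t_0, x) = \langle \mu_0, K(t_0, x, m_{01}, \cdot) - K(t_0, x, m_{02}, \cdot)\rangle$. It therefore suffices to prove
\[
\|v_1 - v_2\|_{1, 2+\alpha} \;\leq\; C\, \mathbf{d}_1(m_{01}, m_{02})\, \|\mu_0\|_{-(2+\alpha)},
\]
because choosing $\mu_0$ among $\{\pm \partial_y^\ell \delta_y : |\ell|\le 2\}$ and the incremental-ratio combinations $\partial_y^2(\delta_y - \delta_{y'})/|y-y'|^\alpha$ (all of them uniformly bounded in $\mathcal{C}^{-(2+\alpha)}$, as exploited in the proof of \eqref{kappa}) then produces, via the Cauchy-sequence plus Ascoli--Arzel\`a argument already employed in \eqref{alotteriarubabbeh}, all the derivatives of $K(t_0, \cdot, m_{01}, \cdot) - K(t_0, \cdot, m_{02}, \cdot)$ required for the $\mathcal{C}^{2+\alpha, 2+\alpha}$ bound.

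A direct computation casts $(w, \eta) := (v_1 - v_2, \mu_1 - \mu_2)$ as a solution of the \emph{general} linearized system \eqref{linear} around $(u_1, m_1)$ with $\rho_0 = 0$ and source terms
\begin{align*}
z_T &= [\dm{G}(\cdot, m_1(T)) - \dm{G}(\cdot, m_2(T))](\mu_2(T)),\\
h &= [\dm{F}(\cdot, \cdot, m_1) - \dm{F}(\cdot, \cdot, m_2)](\mu_2) - [H_p(\cdot, \cdot, Du_1) - H_p(\cdot, \cdot, Du_2)]\cdot Dv_2,\\
c &= \mu_2[H_p(Du_1) - H_p(Du_2)] + (m_1 - m_2) H_{pp}(Du_1) Dv_2 + m_2[H_{pp}(Du_1) - H_{pp}(Du_2)] Dv_2.
\end{align*}
Applying Proposition \ref{linearD} yields $\|w\|_{1, 2+\alpha} \leq C(\|z_T\|_{2+\alpha} + \|h\|_{0,\alpha} + \|c\|_{L^1})$, and the task reduces to bounding each source by $C\,\mathbf{d}_1(m_{01}, m_{02})\,\|\mu_0\|_{-(2+\alpha)}$. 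The estimates for $z_T$ and $h$ are routine: the Lipschitz-in-$m$ characters of $\dm{G}$ and $\dm{F}$ from Hypotheses \ref{ipotesi}, paired with the Lipschitz MFG estimates \eqref{lipsch}--\eqref{oterz} and the pure-linearized bound \eqref{sbrigati} (so that $\|v_2\|_{1,2+\alpha} + \sup_t \|\mu_2(t)\|_{-(2+\alpha), D} \leq C\|\mu_0\|_{-(2+\alpha)}$), deliver the required bound; for the second summand of $h$ one uses a Taylor expansion of $H_p$, the Lipschitz dependence of $Du_1 - Du_2$ on $m$, and the H\"older bound on $Dv_2$.

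The delicate point is $\|c\|_{L^1}$: when $\mu_0$ is a second-order derivative of a Dirac (needed to reach the $\mathcal{C}^{2+\alpha}$ regularity in $y$), the solution $\mu_2$ is only a distribution and the product $\mu_2[H_p(Du_1) - H_p(Du_2)]$ has no pointwise $L^1$ meaning. The remedy mirrors the approximation procedure of Proposition \ref{peggiodellagerma}: fix a smooth sequence $\mu_0^k \to \mu_0$ in $\mathcal{C}^{-(2+\alpha)}$ with $\mu_0^k$ vanishing on $\partial\Omega$, whose associated $\mu_2^k$ lies in $L^p$. The problematic summand is then genuinely in $L^1$, controlled by $\|\mu_2^k\|_{L^p}\,\|H_p(Du_1) - H_p(Du_2)\|_{L^{p'}} \leq C\,\|\mu_0\|_{-(2+\alpha)}\,\mathbf{d}_1(m_{01}, m_{02})$ (the second factor from \eqref{lipsch}), uniformly in $k$. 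Passing to the limit via the stability parts of Propositions \ref{peggiodellagerma} and \ref{linearD} transfers the estimate to the original $\mu_2$. The remaining two summands of $c$ are handled directly by H\"older's inequality, using $\|m_1 - m_2\|_{L^p}$ from \eqref{lipsch}, $\|Dv_2\|_\infty$ from \eqref{sbrigati}, and the smallness of $H_{pp}(Du_1) - H_{pp}(Du_2)$. Collecting all bounds gives $\|w\|_{1, 2+\alpha} \leq C\,\mathbf{d}_1(m_{01}, m_{02})\,\|\mu_0\|_{-(2+\alpha)}$, closing the argument.
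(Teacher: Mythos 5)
Your overall architecture is the same as the paper's: write $(v_1-v_2,\mu_1-\mu_2)$ as a solution of the general linearized system around $(u_1,m_1)$ with zero initial measure, identify the sources $z_T$, $h$, $c$ (your formulas agree with the paper's up to a further splitting of the $H_{pp}$ term), bound each source by $C\,\mathbf{d}_1(m_{01},m_{02})\,\norm{\mu_0}_{-(2+\alpha)}$, and conclude via the representation formula by letting $\mu_0$ range over derivatives of Dirac masses. The estimates for $z_T$, $h$, and the last two summands of $c$ are carried out exactly as in the paper.

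However, there is a genuine gap in your treatment of the first summand $\mu_2\big[H_p(Du_1)-H_p(Du_2)\big]$. You claim that, after approximating $\mu_0$ by smooth $\mu_0^k$, one has $\norm{\mu_2^k}_{L^p}\le C\norm{\mu_0}_{-(2+\alpha)}$ uniformly in $k$. This is not available: the $L^p$ bound on the linearized Fokker--Planck solution (Proposition \ref{linearD}, estimate \eqref{stimelin}) controls $\norm{\mu_2^k}_{L^p}$ only by $\norm{\mu_0^k}_{-(1+\alpha)}$, and when $\mu_0$ is a second-order derivative of a Dirac mass --- precisely the case you need to reach the $\mathcal{C}^{2+\alpha}$ regularity in $y$ --- $\mu_0$ does not belong to $\mathcal{C}^{-(1+\alpha)}$ at all, so $\norm{\mu_0^k}_{-(1+\alpha)}\to\infty$ and the claimed uniformity fails. (The same blow-up occurs for the incremental ratios $\Delta_h^j(-\partial_{y_i}\delta_y)$ as $h\to0$.) The estimate that actually survives the passage to such singular data is the space-time dual bound of \eqref{sbrigati}, namely $\norm{\mu_2}_{-(\frac\alpha2,\alpha)}\le C\norm{\mu_0}_{-(2+\alpha)}$, and the term must therefore be handled as a duality pairing between $\mathcal{C}^{-\frac\alpha2,-\alpha}$ and the H\"older class, i.e.
\begin{equation*}
\int_0^T\into \big(H_p(Du_1)-H_p(Du_2)\big)\,\mu_2(t,dx)\,dt\;\le\;C\,\norm{u_1-u_2}_{\frac{1+\alpha}{2},1+\alpha}\,\norm{\mu_2}_{-(\frac\alpha2,\alpha)}\;\le\;C\,\mathbf{d}_1(m_{01},m_{02})\,\norm{\mu_0}_{-(2+\alpha)}\,,
\end{equation*}
using the H\"older regularity of $Du_1-Du_2$ from \eqref{lipsch} rather than an $L^{p'}$ bound. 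This is exactly how the paper closes this step; with that substitution your argument goes through, but as written the $L^p$--$L^{p'}$ H\"older pairing does not.
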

\begin{proof}
We consider, for $i=1,2$, the solution $(v_i,\mu_i)$ of the linearized system \eqref{linear} related to $(u_i,m_i)\,.$

To avoid too heavy notations, we take $t_0=0$ and we define
\begin{align*}
\begin{array}{ll}
H'_i(t,x):=H_p(t,x,Du_i(t,x))\,, & H''_i(t,x)=H_{pp}(t,x,Du_i(t,x))\,,\\
F'(t,x,m,\mu)=\displaystyle\into \dm{F}(t,x,m,y)\,\mu(dy)\,, & G'(x,m,\mu)=\displaystyle\into \dm{G}(x,m,y)\,\mu(dy)\,. 
\end{array}
\end{align*}

Then the couple $(z,\rho):=(v_1-v_2,\mu_1-\mu_2)$ satisfies the following linear system:
$$
\begin{cases}
-z_t-\mathrm{tr}(a(x)D^2z)+H'_1\cdot Dz=F'(t,x,m_1(t),\rho(t))+h\,,\\
\rho_t-\sum\limits_{i,j}\partial^2_{ij}(a_{ij}(x)\rho)-\mathrm{div}(\rho H'_1)-\mathrm{div}(m_1H''_1Dz+c)=0\,,\\
z(T,x)=G'(x,m_1(T),\rho(T))+z_T\,,\qquad \rho(t_0)=0\,,\\
z_{|\partial\Omega}=0\,,\qquad \rho_{|\partial\Omega}=0\,,
\end{cases}
$$
where
\begin{align*}
&h(t,x)=h_1(t,x)+h_2(t,x)\,,\\
&h_1(t,x)=F'(t,x,m_1(t),\mu_2(t))-F'(t,x,m_2(t),\mu_2(t))\,,\\
&h_2(t,x)=\big(H'_1(t,x)-H'_2(t,x)\big)\cdot Dv_2(t,x)\,,\\
&c(t,x)=\mu_2(t)\big(H'_1-H'_2)(t,x)+\big[(m_1H''_1-m_2H''_2)\big](t,x)\,,\\
&z_T(x)=G'(x,m_1(T),\mu_2(T))-G'(x,m_2(T),\mu_2(T))\,.
\end{align*}

Applying \eqref{stimelin} we obtain this estimate on $z$:
$$
\norm{z}\amv\le C\left(\norm{z_T}_{2+\alpha}+\norm{h}_{0,\alpha}+\norm{c}_{L^1}\right)\,.
$$

Now we estimate the terms in the right-hand side.

The term with $z_T$, thanks to \eqref{sbrigati} and the hypothesis \emph{(v)} of \ref{ipotesi}, is immediately estimated:
$$
\norm{z_T}_{2+\alpha}\le\norm{\dm{G}(\cdot,m_1(T),\cdot)-\dm{G}(\cdot,m_2(T),\cdot)}_{2+\alpha,2+\alpha}\norm{\mu_2(T)}_{-(2+\alpha),D}\le C\dw(m_{01},m_{02})\norm{\mu_0}_{-(2+\alpha)}\,.
$$
As regards the space estimate for $h$, we have
$$
\norm{h(t,\cdot)}_\alpha\le\norm{F'(\cdot,m_1(t),\mu_2(t))-F'(\cdot,m_2(t),\mu_2(t))}_\alpha+\norm{(H'_1-H'_2)(t,\cdot)Dv_2(t,\cdot)}_\alpha\,.
$$
The first term is bounded as $z_T:$
$$
\norm{F'(\cdot,m_1(t),\mu_2(t))-F'(\cdot,m_2(t),\mu_2(t))}_\alpha\le C\dw(m_{01},m_{02})\norm{\mu_0}_{-(2+\alpha)}\,.
$$
The second term, using \eqref{lipsch} and \eqref{sbrigati}, can be estimated in this way:
$$
\norm{(H'_1-H'_2)(t,\cdot)Dv_2(t,\cdot)}_\alpha\le C\norm{(u_1-u_2)(t)}_{1+\alpha}\norm{v_2(t)}_{1+\alpha}\le C\dw(m_{01},m_{02})\norm{\mu_0}_{-(2+\alpha)}\,.
$$
In summary,
$$
\norm{h}_{0,\alpha}=\supo\norm{h(t,\cdot)}_\alpha\le C\dw(m_{01},m_{02})\norm{\mu_0}_{-(2+\alpha)}\,.
$$
Finally, we estimate $\norm{c}_{L^1}$. We have
\begin{align*}
\norm{c}_{L^1} &=\intif(H'_1-H'_2)(t,x)\,\mu_2(t,dx)\,dt+\intif H''_1(t,x)Dv_2(t,x)(m_1(t)-m_2(t))(dx)\,dt\\
&+\intif(H'_1-H'_2)(t,x)\,Dv_2(t,x)\,m_2(t,dx)\,dt\le C\norm{u_1-u_2}\amu\norm{\mu_2}\aam\\
&+C\norm{u_1}\amu\norm{v_2}\amu\norm{m_1-m_2}_{L^1}+C\norm{u_1-u_2}\amu\norm{v_2}\amu\,.
\end{align*}
The first term in the right-hand side, thanks to \eqref{lipsch} and \eqref{sbrigati}, is bounded by
$$
C\norm{u_1-u_2}\amu\norm{\mu_2}\aam\le C\dw(m_{01},m_{02})\norm{\mu_0}_{-(2+\alpha)}\,.
$$
The second and the third term are estimated in the same way, using \eqref{first}, \eqref{lipsch} and \eqref{sbrigati}. Then
$$
\norm{c}_{L^1}\le C\dw(m_{01},m_{02})\norm{\mu_0}_{-(2+\alpha)}\,.
$$
Putting together all these estimates, we finally obtain:
$$
\norm{z}\amv\le C\dw(m_{01},m_{02})\norm{\mu_0}_{-(2+\alpha)}\,.
$$
Since
$$
z(t_0,x)=\into\left(\dm{U}(t_0,x,m_1,y)-\dm{U}(t_0,x,m_2,y)\right)\,\mu_0(dy)\,,
$$
we have proved \eqref{lipsch1}\,.
\end{proof}

We stress the fact that in the bound \eqref{lipsch1} a $2+\alpha$ norm in the last variable is strongly required, to make this estimate valid also for the quantity $D_yD_mU$.

To conclude, we are still left proving the boundary condition for $\dm{U}$ in \eqref{Master} to make true all the hypotheses needed to apply Theorem \ref{settepuntouno}. The latter will be the last result of this section.

\begin{corollary}\label{delarue}
If hypotheses \ref{ipotesi} hold true, then we have the following boundary conditions for $U$:
\begin{equation*}
\begin{split}
&\dm{U}(t,x,m,y)=0\,,\qquad\forall x\in\partial\Omega, y\in\Omega,t\in[0,T],m\in\mathcal{P}^{sub}(\Omega)\,,\\
&\dm{U}(t,x,m,y)=0\,,\qquad\forall x\in\Omega, y\in\partial\Omega,t\in[0,T],m\in\mathcal{P}^{sub}(\Omega)\,.
\end{split}
\end{equation*}
\begin{proof}
The proof of the first boundary condition is trivial, since $\dm{U}(t_0,x,m_0,y)=v(t_0,x;\delta_y)$ and $v$ satisfies a Dirichlet boundary condition.

For the second condition, let us consider $y\in\partial\Omega$. To prove that $v(t_0,x;\delta_y)=0$, it is enough to check that $v=0$ solves 
\begin{equation}\label{muovt}
\begin{cases}
\displaystyle -v_t-\mathrm{tr}(a(x)D^2v)+H_p(t,x,Du)\cdot Dv={\frac{\delta F}{\delta m}}(t,x,m(t))(\mu(t))\,,\\
\displaystyle
v(T,x)={\frac{\delta G}{\delta m}}(x,m(T))(\mu(T))\,,\\
\displaystyle
\bdone{v}\,,
\end{cases}
\end{equation}
where $\mu$ is the unique solution, in the sense of Definition \ref{canzonenuova}, of
$$
\begin{cases}
\mu_t-\nobt{\mu}-\mathrm{div}(\mu H_p(t,x,Du))=0\,,\\
\mu(t_0)=\mu_0\,,\\
\bdone{\mu}\,. 
\end{cases}
$$
In this way we have $mH_{pp}(t,x,Du)Dv=0$, and so the couple $(v,\mu)$ turns out to be a solution of
\begin{equation*}
\begin{cases}
\displaystyle -v_t-\mathrm{tr}(a(x)D^2v)+H_p(t,x,Du)Dv= {\dm{F}}(t,x,m(t))(\rho(t))\,, \medskip \\
\displaystyle	\mu_t-\sum\limits_{ij}\partial^2_{ij}(a_{ij}(x)\mu)  -\mathrm{div}\big(\mu H_p(t,x,Du)+m H_{pp}(t,x,Du) Dv\big)=0 ,\\
\displaystyle v(T,x)= {\dm{G}}(x,m(T))(\mu(T))\,,\qquad\mu(t_0)=\mu_0\,, \medskip\\
\displaystyle \bdone{v}\,,\qquad\bdone{\mu}\, \medskip ,
\end{cases}
\end{equation*}
which is exactly the pure linearized system.

Due to the linearity character of \eqref{muovt}, we only need to prove that
$$
\dm{F}(t,x,m(t))(\mu(t))=\dm{G}(x,m(T))(\mu(T))=0\,.
$$
Thanks to the hypotheses \ref{ipotesi}, both $\dm{F}(x,m(t),\cdot)$ and $\dm{G}(x,m(T),\cdot)$ satisfy a Dirichlet boundary condition, being also elements of $\mathcal{C}^{2+\alpha}$. Then, choosing $\phi(t,y)$ satisfying \eqref{hjbfp} with $\psi(t,y)=0$ and $\xi(y)=\dm{F}(t,x,m(t),y)$, we have

$$	\begin{array}{c}
\displaystyle \dm{F}(t,x,m(t))(\mu(t))=\left\langle \mu(t),\dm{F}(t,x,m(t),\cdot)\right\rangle \medskip \\ \displaystyle \qquad =\langle\mu_0,\phi(0,\cdot)\rangle=\langle \delta_y,\phi(0,\cdot)\rangle=\phi(0,y)=0\,, \medskip
\end{array}
$$
since $\psi$ satisfies a Dirichlet boundary condition. The same computations hold for $\dm{G}$, therefore we have:

\begin{align*}
\dm{U}(t_0,x,m_0,y)=\left\langle\dm{U}(t_0,x,m_0,\cdot),\delta_y\right\rangle=v(t_0,x)=0\,.
\end{align*}
\end{proof}
\end{corollary}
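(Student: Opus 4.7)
The plan is to exploit the representation formula $\dm{U}(t_0, x, m_0, y) = v(t_0, x; \delta_y) = K(t_0, x, m_0, y)$ established in the previous section, where $(v, \mu)$ denotes the solution of the pure linearized system \eqref{linear} starting from $\mu_0 = \delta_y$. With this identification in hand, both boundary conditions reduce to statements about the linearized system, for which all necessary well-posedness and regularity results are already in place.

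For the first condition ($x \in \partial\Omega$), the argument is essentially tautological: the pure linearized system carries the Dirichlet condition $\bdone{v}$, so $v(t_0, x; \delta_y) = 0$ whenever $x \in \partial\Omega$, regardless of $y \in \Omega$.

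For the second condition ($y \in \partial\Omega$), I would argue by uniqueness. The goal is to verify that the pair $(0, \mu)$, with $\mu$ solving the pure Fokker-Planck problem with initial datum $\delta_y$ (and no $H_{pp} Dv$ forcing), is a solution of \eqref{linear} in the sense of Definition \ref{definition}. Granted this, the uniqueness part of Proposition \ref{linearD} yields $v \equiv 0$, and hence $\dm{U}(t_0, x, m_0, y) = v(t_0, x; \delta_y) = 0$. Substituting $v \equiv 0$ into the linearized HJB reduces the problem to checking the two scalar identities
\begin{equation*}
\dm{F}(t, x, m(t))(\mu(t)) = 0, \qquad \dm{G}(x, m(T))(\mu(T)) = 0.
\end{equation*}

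The central step is the verification of these identities via duality, leveraging hypothesis (vi) of \ref{ipotesi}. Since $y \mapsto \dm{F}(t, x, m(t), y)$ vanishes on $\partial\Omega$ and belongs to $\mathcal{C}^{1+\alpha, D}$, it is an admissible terminal datum for the adjoint problem \eqref{hjbfp}. Taking $\phi$ as the solution with $\psi = 0$ and $\xi = \dm{F}(t, x, m(t), \cdot)$, Lemma \ref{sonobravo} provides a $\mathcal{C}^{\frac{1+\alpha}{2}, 1+\alpha}$ solution satisfying $\bdone{\phi}$. Applying the weak formulation \eqref{weakmu} for $\mu$ with this test function yields
\begin{equation*}
\dm{F}(t, x, m(t))(\mu(t)) = \langle \mu(t), \xi\rangle = \langle \delta_y, \phi(t_0, \cdot)\rangle = \phi(t_0, y) = 0,
\end{equation*}
the last equality being precisely the Dirichlet condition for $\phi$ evaluated at $y \in \partial\Omega$. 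The identical argument with $\xi = \dm{G}(x, m(T), \cdot)$ and the appropriate time interval handles the terminal term.

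I do not anticipate a serious obstacle here, since the heavy lifting has already been accomplished: the representation of $\dm{U}$ as a fundamental solution of the linearized system, the well-posedness of \eqref{linfp} for singular data in $\mathcal{C}^{-(1+\alpha)}$ (Proposition \ref{peggiodellagerma}) which legitimises $\mu_0 = \delta_y$, and most crucially the compatibility assumption (vi) in \ref{ipotesi} which was designed precisely to make $\dm{F}$ and $\dm{G}$ admissible test data with vanishing boundary trace. The only delicate point is to notice that $\delta_y$ with $y \in \partial\Omega$ must be interpreted as an element of $\mathcal{C}^{-(1+\alpha)}$ supported on $\overline\Omega$, which is consistent with the extension of the theory to subprobability measures on $\overline\Omega$ discussed in Section \ref{subp}.
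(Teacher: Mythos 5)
Your proposal is correct and follows essentially the same route as the paper: both reduce the second boundary condition to showing that $(0,\mu)$ solves the pure linearized system, and both verify the vanishing of $\dm{F}(t,x,m(t))(\mu(t))$ and $\dm{G}(x,m(T))(\mu(T))$ by testing the Fokker--Planck weak formulation against the adjoint solution $\phi$ with terminal datum $\dm{F}$ (resp. $\dm{G}$) and using $\phi(t_0,y)=0$ for $y\in\partial\Omega$. Your explicit appeal to the uniqueness part of Proposition \ref{linearD} makes the logical structure slightly more transparent than the paper's phrasing, but the argument is the same.
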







%
%



\section*{Declarations}

\begin{itemize}
\item All authors contributed to the study conception and design. All authors read and approved the final manuscript.
\item No funds, grants, or other support was received.
\item All authors declare that they have no financial interests or conflicts of interest.
\end{itemize}




\section*{Acknowledgements}
We would like to thank Espen R. Jakobsen for the useful suggestions.

\bibliographystyle{abbrv}
\bibliography{MasterEquation_absorption}

\end{document}